\newcommand{\Cnlr}{C^{(r)}_{n, \ell}}
\newcommand{\C}{C_1}
\newcommand{\CC}{C_2}
\title{Exact threshold and limiting distribution for non-linear Hamilton cycles}
\author{Byron Chin}
\address{Department of Mathematics, Massachusetts Institute of Technology}
\email{byronc@mit.edu}
\numberwithin{equation}{section}
\begin{document}

\begin{abstract}
    For positive integers $r > \ell \geq 1$, an $\ell$-cycle in an $r$-uniform hypergraph is a cycle where each edge consists of $r$ vertices and each pair of consecutive edges intersect in $\ell$ vertices. For $\ell \geq 2$, we determine the limiting distribution of the number of Hamilton $\ell$-cycles in an Erd\H{o}s--R\'enyi random hypergraph. The behavior is distinguished in two cases:
    \begin{itemize}
        \item[--] When $\ell \geq 3$, the number of cycles concentrates when the expectation diverges and converges to a Poisson distribution when the expectation is constant.
        \item[--] When $\ell = 2$, the normalized number of cycles converges to a lognormal distribution when the expectation diverges and converges to a lognormal mixture of Poisson distributions when the expectation is constant.
    \end{itemize}
    As a result we pin down the exact threshold for the appearance of non-linear Hamilton cycles in random hypergraphs, confirming a conjecture of Narayanan and Schacht. 
\end{abstract}

\maketitle

\section{Introduction}
The problem of determining the critical density at which a specific substructure will appear inside a random structure has been central to the development of probabilistic combinatorics. Starting with the case of fixed size subgraphs in random graphs, a fairly complete answer was determined by Bollob\'as using the second moment method \cite{B:81}. The cases of spanning structures have been more difficult to handle, with a general partial answer provided by Riordan \cite{R:00}, also by a careful second moment calculation. A breakthrough work of Park and Pham \cite{PP:24} resolved the Kahn--Kalai conjecture, which gives a general purpose tool to locate these thresholds up to logarithmic factors. However, pinning down the sharp location of thresholds still remains a challenge. 

Several substructures of interest have also been studied specifically. One of the most notable cases is that of perfect matchings. Early works of Erd\H{o}s and R\'enyi \cite{ER:66} determined the threshold for the appearance of a perfect matching in a random graphs. However, the corresponding problem for hypergraphs, known as Shamir's problem, was notoriously difficult before it was resolved in a breakthrough work of Johansson, Kahn, and Vu \cite{JKV:08}. 

We study the problem of determining when a random hypergraph contains another spanning structure of interest, the Hamilton cycle -- that is a cycle that uses every vertex of the graph. This falls into a long line of work searching for spanning structures in random (hyper)graphs. Starting with the case of graphs, foundational works by P\'{o}sa \cite{P:76}, Koml\'os and Szemer\'edi \cite{KS:83}, and Ajtai, Koml\'os, and Szemer\'edi \cite{AKS:85} pinned down not only the sharp threshold, but also the more precise hitting time result equating the appearance of a Hamilton cycle with the time at which the minimum degree becomes two. 

In the setting of hypergraphs, there is more than one way to form a Hamilton cycle. In particular, for any positive integers $r > \ell \geq 1$, we can consider the $r$-uniform $\ell$-cycle, where each edge consists of $r$ consecutive vertices and any two consecutive edges overlap in exactly $\ell$ vertices. The case when $\ell = 1$ is known as a linear, or loose, cycle whereas when $\ell = r-1$ the cycle is called tight. Building on work of Dudek and Frieze \cite{DF:11, DF:13}, Narayanan and Schacht \cite{NS:20} showed that the first moment threshold is sharp for the appearance of any non-linear Hamilton cycle. Their work is a combination of a careful second moment estimate with a general theorem of Friedgut \cite{F:99} characterizing sharp thresholds. 

However, finding the sharp threshold in this setting leaves more to be desired, as any density a factor of $1+\epsilon$ above the first moment threshold produces \textit{exponentially} many Hamilton cycles on average. This leaves the question of the appearance of Hamilton cycles when the expectation grows at a slower than exponential rate. Narayanan and Schacht conjectured that the first moment threshold is much sharper than they were able to show. In particular, a Hamilton cycle should appear as soon as the expected number tends to infinity, at any arbitrarily slow rate.

The motivation of this paper is to confirm this conjecture by carrying out a further refined second moment calculation. As we will see, we will be able to say much more about the number of Hamilton cycles that appear. While for $\ell \geq 3$, a careful analysis reveals a sharp second moment, the special case of $\ell = 2$ requires using a technique known as small subgraph conditioning. In this case we show that the second moment method is sharp not for the number of Hamilton 2-cycles, but for a suitably modified random variable. A natural consequence of this method is the characterization that the number of Hamilton 2-cycles converges to an explicit lognormal distribution. Using the above characterization we are able to deduce the limiting behavior of the number of Hamilton cycles in the constant expectation regime as well. 

\subsection{Main definitions}\label{subsec: MD}
Let $G_r(n, p)$ be the $r$-uniform hypergraph where each subset of the vertices of size $r$ is included as an edge independently with probability $p$, and let $\Cnlr$ be the $r$-uniform $\ell$-cycle on $n$ vertices. For any hypergraph $H$, we will use $Z(H)$ to denote the random variable counting the number of copies of $H$ in $G_r(n,p)$ and $N_H$ to denote the number of copies of $H$ in the complete hypergraph. In particular, $N_H = \frac{(n)_{v(H)}}{\mathrm{Aut}(H)}$ where $(n)_k = n(n-1)\cdots (n-k+1)$ is the falling factorial and $\mathrm{Aut}(H)$ is the number of automorphisms of $H$. 

The first moment threshold for $\Cnlr$ can be described as follows. Let $s = r-\ell$ and $1 \leq t \leq s$ be the unique integer satisfying $t \equiv r \pmod s$. Define $\lambda(r, \ell) = t!(s-t)!$, then the first moment threshold, where $\E{Z(\Cnlr)} = 1$, is of the form
\[ p^*(r,\ell) = (1+o(1))\frac{\lambda(r,\ell)e^{r-\ell}}{n^{r-\ell}}. \]
For the remainder of the paper, we will parameterize $p = cp^*(r,\ell)$. We will focus on the case where $c$ is bounded, in particular $c$ around 1, as this captures the setting where the behavior of the threshold is not known. 

\subsection{Results}
With these definitions, we can more precisely describe the current knowledge on this problem. The state of the art was proven by Narayanan and Schacht, who showed that $p^*(r,\ell)$ is a sharp threshold for the appearance of $\Cnlr$. However, note that the probability that a fixed copy of $\Cnlr$ appears is $p^{\frac{n}{r-\ell}}$, so the expected number of copies of $\Cnlr$ for $p > (1+\epsilon)p^*(r,\ell)$ is at least $(1+\epsilon)^{\frac{n}{r-\ell}}$, which is exponentially large. This motivates their following conjecture.
\begin{conjecture}[{\cite[Conjecture 4.1]{NS:20}}]\label{conj: NS}
    For all integers $r > \ell > 1$, as $n \to \infty$ with $(r-\ell) \ | \ n$, if $p = p(n)$ is such that $\E{Z(\Cnlr)} \to \infty$ then 
    \[ \Prob{\Cnlr \subset G_r(n, p)} \to 1. \]
\end{conjecture}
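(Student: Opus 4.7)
The approach is the second moment method applied to $Z := Z(\Cnlr)$, with a natural split between $\ell \geq 3$ and $\ell = 2$. Writing $m = n/(r-\ell)$ for the number of edges of a Hamilton $\ell$-cycle and using that this number is the same for every such cycle, for any fixed reference cycle $\C$ one has
\[
\frac{\E{Z^2}}{(\E{Z})^2} \;=\; \frac{1}{N_{\Cnlr}} \sum_{\CC} p^{-|E(\C) \cap E(\CC)|}.
\]
The combinatorial core is the classification of $E(\C) \cap E(\CC)$: since any two consecutive shared edges share $\ell$ vertices, this intersection decomposes into a disjoint union of shared $\ell$-subpaths, each contiguous in both cycles. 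I would parametrize the sum by the multiset of arc lengths and the placements of the arcs in $\CC$ relative to $\C$.

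For $\ell \geq 3$, the plan is to show that every nonempty overlap pattern contributes $o(1)$ to the above sum. Each additional shared edge is worth a factor $p^{-1} = \Theta(n^{r-\ell})$ but costs roughly $\Theta(n^{r-2\ell})$ in placement freedom for $\CC$, a net loss of $\Theta(n^{-\ell})$ per shared edge; summing geometrically over arc lengths and arc positions then yields $\E{Z^2}/(\E{Z})^2 = 1 + o(1)$. Chebyshev then gives $Z = (1+o(1))\E{Z}$ a.a.s., and in particular $\Prob{\Cnlr \subset G_r(n,p)} \to 1$ whenever $\E{Z} \to \infty$.

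For $\ell = 2$ the same computation converges to a constant $L > 1$, because the factor gained per shared edge is only $\Theta(n^{-2})$ and arcs of each bounded length contribute a $\Theta(1)$ term that sums geometrically. Paley--Zygmund alone then gives only $\Prob{Z > 0} \geq 1/L + o(1)$, which is insufficient for the conjecture. The fix is the small subgraph conditioning method of Robinson and Wormald: let $X_k$ count certain short substructures in $G_r(n,p)$ responsible for the excess (the natural candidates are short closed $2$-cycles of each fixed length), and verify (i) $(X_k)_k$ is asymptotically a vector of independent Poissons with intensities $\lambda_k$, (ii) the joint factorial moments of $Z$ with the $X_k$ satisfy the required $(1+\delta_k)$-scaling, and (iii) $\E{Z^2}/(\E{Z})^2$ converges exactly to $\exp(\sum_k \lambda_k \delta_k^2)$. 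The standard criterion then gives $Z/\E{Z} \to W$ in distribution with $W$ almost surely positive, and identifying $W$ as lognormal when $\E{Z} \to \infty$ yields the distributional refinement promised in the abstract.

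The main obstacle is the $\ell = 2$ case, specifically isolating the right family of defect variables and matching the overcount $L$ exactly to the small subgraph conditioning prediction. Since arcs of every bounded length contribute an order-one term, the family of relevant structures is infinite, and one must control the tail uniformly in $n$ as well as in the truncation level $k$. The $\ell \geq 3$ case, by contrast, reduces to a careful but essentially routine second moment bookkeeping over $\ell$-path overlaps, where the polynomial gap $n^{-\ell}$ per shared edge provides significant slack.
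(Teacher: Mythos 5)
Your $\ell \geq 3$ argument is essentially the paper's: a second moment computation organized by the decomposition of $E(\C)\cap E(\CC)$ into shared $\ell$-path arcs, followed by Chebyshev. Two bookkeeping issues you gloss over do require separate treatment (overlaps with between $\log n$ and $n-1$ vertices, where the discrepancy between $(n)_k$ and $n^k$ is exponentially large and must be beaten by a Stirling estimate; and near-spanning connected overlaps, which are handled by comparison with the full-overlap term $1/\E{Z(\Cnlr)}$ and a count of Hamilton cycles extending a given near-spanning subgraph). Also your per-edge accounting is internally inconsistent: the correct tally is a factor $\Theta(n^{2-\ell})$ \emph{per shared arc} (positional freedom $n^2$ against the cost of the $r$ vertices of its first edge and the gain $p^{-1}$) and a \emph{constant} factor per additional edge within an arc --- which is exactly why $\ell=2$ is critical and why "arcs of each bounded length contribute $\Theta(1)$" for $\ell=2$, a statement that contradicts your own claim of a net $\Theta(n^{-2})$ per shared edge. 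These are repairable, and the conclusion ($1+o(1)$ for $\ell\geq 3$, a constant $L>1$ for $\ell=2$) is right.

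The genuine gap is in your $\ell=2$ implementation. First, the defect variables you propose --- counts $X_k$ of short \emph{closed} $2$-cycles --- are the wrong ones. A Hamilton $\ell$-cycle contains no short closed $\ell$-cycle as a subhypergraph, and a direct count of pairs $(C,D)$ with $e(C\cap D)=j\geq 1$ (where $C$ is Hamilton and $D$ is a closed $k$-edge $\ell$-cycle) gives $\E{ZX_k}/(\E{Z}\E{X_k}) = 1+O(n^{1-\ell})$, so every $\delta_k$ in Janson's criterion is $o(1)$ and condition (iii), $\E{Z^2}/\E{Z}^2 \to \exp(\sum_k\lambda_k\delta_k^2)=1$, is false: the left side tends to $L>1$ by your own Part 1 analysis. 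The structures actually responsible for the excess are the \emph{open} paths $P_k$: a Hamilton cycle contains $n/(r-\ell)$ copies of each, which shifts the $P_k$-count by $\Theta(n^{1-\ell/2})$ standard deviations --- a constant number precisely when $\ell=2$. Second, even with the correct structures the classical Poisson criterion does not apply: the model is dense (the number of edges is $\Theta(n^{\ell})$), the counts of $P_k$ are polynomially large and concentrated, and their fluctuations are Gaussian, not Poisson. The paper therefore runs the Gaussian/exponential-tilting variant of small subgraph conditioning (in the spirit of Banerjee and Abbe--Li--Sly): it sets $Y=\sum_k t_kY(P_k)$ with $Y(P_k)$ the centered normalized path count, proves a joint CLT for these under the null, planted, and doubly planted measures, shows that the tilted variable $Z(\Cnlr)e^{-Y}/\E{Z(\Cnlr)}$ has a sharp second moment and hence converges in probability to a positive constant, and reads off the lognormal limit of $Z(\Cnlr)/\E{Z(\Cnlr)}$ (and hence $\Prob{Z(\Cnlr)=0}\to 0$) from the CLT for $Y$. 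As written, your outline would stall at verifying Janson's conditions (i)--(iii).
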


Our main result is that for $\ell \geq 3$, the number of Hamilton $\ell$-cycles concentrates well around its expectation, whereas for $\ell = 2$, the random variable $\frac{Z(\Cnlr)}{\E{Z(\Cnlr)}}$ has a log-normal limiting distribution whenever the expectation tends to infinity. 
\begin{theorem}\label{thm: lognormal}
    For all integers $r > \ell > 1$, as $n \to \infty$ with $(r-\ell) \ | \ n$, if $p = (c+o(1))p^*(r,\ell)$ is such that $\E{Z(\Cnlr)} \to \infty$, then 
    \begin{enumerate}[label=\arabic*.]
        \item\label{thm: lognormal1} For $\ell \geq 3,$ 
        \[ \frac{Z(\Cnlr}{\E{Z(\Cnlr)}} = 1+o(1) \quad w.h.p. \]
        \item\label{thm: lognormal2} For $\ell = 2, $
        \[\frac{Z(\Cnlr)}{\E{Z(\Cnlr)}} \overset{\mathrm{d}}{\longrightarrow} \mathrm{Lognormal}\left(-\sum_{k=1}^{\infty} \frac{A_k c^{-k}e^{-k(r-\ell)}}{2(r-\ell)^2}, \sum_{k=1}^\infty \frac{A_k c^{-k}e^{-k(r-\ell)}}{(r-\ell)^2}\right). \]
    \end{enumerate}
\end{theorem}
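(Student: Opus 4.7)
The two parts are proved by different methods, but both begin from a careful asymptotic second-moment analysis organized by the intersection structure of pairs of Hamilton $\ell$-cycles. My first step would be to write
\[
\frac{\mathbb{E}[Z(\Cnlr)^2]}{\mathbb{E}[Z(\Cnlr)]^2} = \sum_{H} R_n(H) \, p^{-e(H)},
\]
where the sum runs over isomorphism classes of hypergraphs that can arise as the common edge-set $\C \cap \CC$ of two Hamilton $\ell$-cycles and $R_n(H)$ is the asymptotic proportion of pairs whose intersection has type $H$. The dichotomy between $\ell \geq 3$ and $\ell = 2$ is driven by how this sum behaves as $n \to \infty$, and both parts of the theorem amount to controlling it in the relevant regime.

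For part \ref{thm: lognormal1}, with $\ell \geq 3$, the intersection hypergraphs $H$ decompose into short $\ell$-path components whose contributions can be enumerated directly. My plan would be to classify components of $H$ by their number of edges, estimate the multiplicity of each class, and show that $\sum_{H} R_n(H)\, p^{-e(H)} = 1 + o(1)$, using the rigidity that $\ell \geq 3$ imposes on how shared segments extend. Chebyshev's inequality together with this ratio immediately yields concentration.

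For part \ref{thm: lognormal2}, with $\ell = 2$, the same ratio does not converge to $1$, so the second moment alone is not sharp. I would invoke Janson's small-subgraph-conditioning method. The crucial combinatorial step is to identify the right family of small substructures $F_k$ on $O(k)$ vertices --- morally the short $2$-overlap cycle or path fragments reflecting the unique component types that arise when $\ell = 2$ --- with counts $Y_k$ in $G_r(n,p)$. One then must verify (i) joint Poisson convergence of $(Y_k)_{k \geq 1}$ to independent Poissons $(\xi_k)$ with means $\lambda_k$, (ii) the correlation identities defining appropriate defects $\delta_k$, and (iii) the variance-matching identity
\[
\sum_{k \geq 1} \lambda_k \delta_k^2 = \lim_{n \to \infty} \frac{\mathbb{E}[Z^2]}{\mathbb{E}[Z]^2} - 1.
\]
Janson's theorem then gives $Z/\mathbb{E}[Z] \overset{\mathrm{d}}{\longrightarrow} W := \prod_{k \geq 1}(1+\delta_k)^{\xi_k} e^{-\lambda_k \delta_k}$.

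The lognormal claim follows by applying a central limit theorem to $\log W = \sum_k \bigl[(\xi_k - \lambda_k)\log(1+\delta_k) + \lambda_k\bigl(\log(1+\delta_k) - \delta_k\bigr)\bigr]$; since the $\delta_k$ are small, Taylor expansion yields limiting mean $-\tfrac{1}{2}\sum_k \lambda_k \delta_k^2$ and variance $\sum_k \lambda_k \delta_k^2$, so $W$ is lognormal with these parameters. Matching to the form stated in the theorem then reduces to the explicit bookkeeping $\lambda_k \delta_k^2 = A_k c^{-k} e^{-k(r-\ell)}/(r-\ell)^2$ for an appropriate sequence $A_k$. The main obstacle, I expect, is the combinatorial enumeration underlying steps (ii) and (iii) in the $\ell = 2$ case: identifying the exact family $\{F_k\}$ that saturates the variance, and matching their Poisson statistics with the second-moment discrepancy in a way that lets one actually verify the variance-matching identity. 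The $\ell \geq 3$ case is comparatively mechanical once the intersection sum is in place.
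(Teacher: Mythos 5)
Your plan for part \ref{thm: lognormal1} matches the paper: a refined enumeration of the intersection types showing $\E{Z^2}/\E{Z}^2 = 1+o(1)$ for $\ell \geq 3$, followed by Chebyshev. That portion is fine.

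For part \ref{thm: lognormal2} there is a genuine gap: the classical Janson small-subgraph-conditioning framework you invoke does not apply here, because its hypothesis (i) fails. The substructures that drive the excess variance are the $\ell$-paths $P_k$ (the short segments of a Hamilton cycle), and their expected counts are $N_{P_k}p^k \asymp n^{\ell} c^k e^{k(r-\ell)}/A_k$, which for $\ell=2$ is $\Theta(n^2) \to \infty$. These counts are therefore concentrated with Gaussian fluctuations, not Poisson-convergent, so there are no independent Poisson limits $(\xi_k)$, no defects $\delta_k$ in the classical sense, and Janson's theorem in the form you state cannot be run. This is precisely why the paper uses the \emph{dense} variant of small subgraph conditioning (in the spirit of Banerjee and Abbe--Li--Sly): one works with the normalized, centered, orthogonal-decomposition statistics $Y(P_k)$, proves a joint CLT for them under the null, planted, and doubly planted measures (Lemma \ref{lemma: CLT}), shows that $X = \frac{Z}{\E{Z}}e^{-Y}$ with $Y=\sum_k t_k Y(P_k)$ has a sharp second moment after truncation, and concludes $Z/\E{Z} \approx \mathrm{const}\cdot e^{Y}$ with $Y$ asymptotically Gaussian. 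The lognormal limit thus comes directly from exponentiating a single Gaussian, not from a secondary central limit theorem over the index $k$.

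Relatedly, even granting your framework, the final step is not valid: the Janson limit $W=\prod_k(1+\delta_k)^{\xi_k}e^{-\lambda_k\delta_k}$ is in general \emph{not} lognormal. Each factor $(\xi_k-\lambda_k)\log(1+\delta_k)$ is a transformed Poisson, and $\log W$ is Gaussian only if the individual terms are themselves approximately Gaussian (which would require $\lambda_k\to\infty$, contradicting the Poisson hypothesis) or if a Lindeberg-type condition held across $k$, which it does not since the weights $\lambda_k\delta_k^2$ decay geometrically and the first few terms dominate. Your variance-matching identity is also stated in the wrong form: the relevant condition is $\E{Z^2}/\E{Z}^2 \to \exp(\sum_k\lambda_k\delta_k^2)$, not $1+\sum_k\lambda_k\delta_k^2$; in this problem the second-moment ratio indeed converges to $\exp\bigl(\sum_k A_kc^{-k}e^{-k(r-\ell)}/(r-\ell)^2\bigr)$, a constant strictly larger than $1+\sum_k A_kc^{-k}e^{-k(r-\ell)}/(r-\ell)^2$. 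To repair the argument you would need to replace steps (i)--(iii) wholesale with the Gaussian conditioning scheme described above.
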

Here, $A_k$ are explicit constants whose definition we postpone to (\ref{eq: Ak}). We note that $A_k$ remains bounded as $k \to \infty$, so the infinite series converges quickly to a constant. As a corollary, we confirm the above conjecture.
\begin{corollary}\label{cor: conj}
    Conjecture \ref{conj: NS} is true.
\end{corollary}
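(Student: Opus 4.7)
The plan is to derive Corollary \ref{cor: conj} as a direct consequence of Theorem \ref{thm: lognormal}, via monotonicity together with a subsequence argument. The key observation is that the limiting distribution in Theorem \ref{thm: lognormal} is almost surely positive in both cases (the constant $1$ for $\ell \geq 3$, and a lognormal for $\ell = 2$), so its conclusion immediately implies $Z(\Cnlr) > 0$ with high probability.

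The event $\{\Cnlr \subset G_r(n,p)\}$ is monotone increasing in $p$, and by the natural coupling $G_r(n, p') \subseteq G_r(n, p)$ for $p' \leq p$, it suffices to produce, for each $p$ with $\E{Z(\Cnlr)} \to \infty$, a probability $p' \leq p$ along which Theorem \ref{thm: lognormal} can be applied. If $p/p^*(r, \ell) \to \infty$ along some subsequence, I would take $p' = 2p^*(r, \ell)$: this eventually satisfies $p' \leq p$, and Theorem \ref{thm: lognormal} applies at $c = 2$ to give $Z > 0$ w.h.p.\ at $p'$, hence also at $p$.

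In the complementary regime $p/p^*(r, \ell) = O(1)$, I would argue by subsequences: to show $\Prob{\Cnlr \subset G_r(n,p)} \to 1$ it suffices to show that every subsequence of $n$ has a sub-subsequence on which the conclusion holds. Since the ratios $c_n := p/p^*(r, \ell)$ are bounded, every subsequence has a sub-subsequence along which $c_n \to c_0$ for some $c_0 \in [1, \infty)$; the lower bound $c_0 \geq 1$ follows from $\E{Z(\Cnlr)} \to \infty$, which asymptotically forces $c_n \geq 1$. Along the sub-subsequence, $p = (c_0 + o(1))p^*(r, \ell)$, so Theorem \ref{thm: lognormal} yields $Z(\Cnlr) > 0$ w.h.p., completing the argument.

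The delicate case is $c_0 = 1$, in which the hypothesis $\E{Z(\Cnlr)} \to \infty$ forces the $o(1)$ correction in $p/p^* = 1 + o(1)$ to shrink strictly slower than $1/n$. The limiting distribution itself remains well-defined at $c_0 = 1$, since $A_k$ stays bounded in $k$ and the relevant series converges absolutely there, so no pathology arises from the statement. The expected obstacle is rather to confirm that the second moment and small subgraph conditioning estimates underpinning Theorem \ref{thm: lognormal} go through uniformly as $c \downarrow 1$; this is precisely the near-threshold regime for which the refined moment analysis of this paper is designed, and it is the only genuine content beyond the soft reduction above.
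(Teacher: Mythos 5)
Your proposal is correct and rests on the same core step as the paper's own (two-line) proof: the limiting law in Theorem \ref{thm: lognormal} is almost surely positive (the constant $1$ for $\ell \geq 3$, a lognormal for $\ell = 2$), so $\Prob{Z(\Cnlr) = 0} \to 0$. The additional monotone-coupling and subsequence reduction you give to bridge the gap between the conjecture's arbitrary $p(n)$ and the theorem's parameterization $p = (c+o(1))p^*$ is sound and in fact more careful than the paper, which leaves this reduction implicit; note also that your closing worry about uniformity in $c$ is moot, since along each sub-subsequence $c_n \to c_0$ is fixed and Theorem \ref{thm: lognormal} applies as a black box.
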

\begin{proof}
    For $\ell \geq 3$, the result follows immediately. For $\ell = 2$, since the log-normal distribution has a density with respect to the positive real line, we have 
    \[ \Prob{Z(\Cnlr) = 0} \leq \Prob{\frac{Z(\Cnlr)}{\E{Z(\Cnlr)}} = 0} \xrightarrow{n \to \infty} 0. \]
\end{proof}
By performing a subsampling argument, we can also deduce the limiting behavior when the expectation is constant, which is the content of our final theorem.
\begin{theorem}\label{thm: poisson}
    For all integers $r > \ell > 1$, as $n \to \infty$ with $(r-\ell) \ \vert \ n$, if $p = p(n)$ is such that $\E{Z(\Cnlr)} = m$, then
    \begin{enumerate}
        \item[1.] For $\ell \geq 3$, 
        \[ Z(\Cnlr) \overset{\mathrm{d}}{\longrightarrow} \mathrm{Pois}(m). \]
        \item[2.] For $\ell = 2$, 
        \[ Z(\Cnlr) \overset{\mathrm{d}}{\longrightarrow} \mathrm{Pois}(mL) \]
        where $L \sim \mathrm{Lognormal}\left(-\sum_{k=1}^\infty \frac{A_ke^{-k(r-\ell)}}{2(r-\ell)^2}, \sum_{k=1}^\infty \frac{A_ke^{-k(r-\ell)}}{(r-\ell)^2}\right)$.
    \end{enumerate}
\end{theorem}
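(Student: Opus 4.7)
The plan is to deduce Theorem \ref{thm: poisson} from Theorem \ref{thm: lognormal} via a subsampling coupling. Given $p$ with $\E{Z(\Cnlr)} = m$ constant, I would write $p = c_n p^*$ and note that $c_n \to 1$. I then choose an auxiliary $p' = c_n' p^*$ with $c_n' \to 1$ so that $m' := \E{Z(\Cnlr)}|_{p'} \to \infty$ slowly (say $m' = \log \log n$); since $c_n' \to 1$, the parameters of the lognormal appearing in part 2 of Theorem \ref{thm: lognormal} at $p'$ converge to those appearing in part 2 of Theorem \ref{thm: poisson}. I couple $G_r(n,p) \subseteq G_r(n,p')$ by independent edge-thinning with probability $q = p/p'$, so that each Hamilton $\ell$-cycle present at $p'$ survives to $G_r(n,p)$ iff all of its $n/(r-\ell)$ edges are retained, an event of conditional probability $q^{n/(r-\ell)} = m/m'$.

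Conditional on $G_r(n,p')$, the variable $Z(\Cnlr)$ at $p$ is a sum of non-independent Bernoullis indexed by the Hamilton cycles of $G_r(n,p')$, each with survival probability $m/m'$. I would then apply the Chen--Stein method: the total-variation distance to $\mathrm{Pois}(Z(\Cnlr)|_{p'} \cdot m/m')$ is bounded by the sum, over pairs of Hamilton cycles in $G_r(n,p')$, of their excess joint-survival probability. Taking expectation over $G_r(n,p')$ collapses this to the usual pair-correlation sum of $Z(\Cnlr)$ at $p$ restricted to pairs of cycles sharing at least one edge, an intersection-type quantity already estimated in the proof of Theorem \ref{thm: lognormal}.

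For $\ell \geq 3$, the intersection sum is $o(1)$ at the constant-expectation scale, since the variance-type analysis that yields $\mathrm{Var}(Z(\Cnlr)) = o(\E{Z(\Cnlr)}^2)$ along the sharp threshold specializes to an $o(1)$ bound whenever $\E{Z(\Cnlr)} = O(1)$. Together with $Z(\Cnlr)|_{p'} = (1 + o_p(1)) m'$ from part 1 of Theorem \ref{thm: lognormal}, this gives $Z(\Cnlr) \overset{\mathrm{d}}{\longrightarrow} \mathrm{Pois}(m)$. For $\ell = 2$, by contrast, the intersection sum is $\Theta(1)$ and precisely encodes the lognormal fluctuation; here I would invoke small subgraph conditioning directly at $p$, paralleling the proof of part 2 of Theorem \ref{thm: lognormal}. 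The claim would be that, conditional on the finite tuple of short-cycle counts $(X_j)$, the count $Z(\Cnlr)$ is asymptotically Poisson with parameter $m \prod_j g_j(X_j)$ for explicit functions $g_j$, and since $(X_j)$ converges jointly to independent Poissons whose image under $\prod_j g_j$ tends in distribution to $L$, Poissonian mixing yields the stated $\mathrm{Pois}(mL)$ limit.

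I expect the main obstacle to be the $\ell = 2$ case, which requires promoting small subgraph conditioning from the statement ``$Z(\Cnlr)/\E{Z(\Cnlr)} \to L$'' (an approximate conditional-expectation statement) to ``$Z(\Cnlr) \mid (X_j)$ is approximately Poisson'' (a full conditional-distribution statement). Concretely, this amounts to verifying joint factorial moment convergence of the form $\E{(Z(\Cnlr))_k \prod_j (X_j)_{a_j}} \to \E{(\mathrm{Pois}(mL))_k \prod_j (\mathrm{Pois}(\lambda_j))_{a_j}}$ for each fixed $k$ and $(a_j)$, which I would handle by expanding the sum over $k$-tuples of Hamilton cycles according to their pairwise intersection patterns, isolating the dominant contribution from $k$-tuples whose intersections consist exactly of the tracked short subcycles, and reusing the intersection-type estimates developed for part 2 of Theorem \ref{thm: lognormal}.
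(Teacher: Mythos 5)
Your argument for $\ell \geq 3$ is essentially the paper's: subsample $G_r(n,p)$ from a denser $G_r(n,p')$ whose expected cycle count $m'$ diverges slowly, note that each cycle survives the thinning with probability $m/m'$, and Poissonize; the required smallness of the overlapping-pair correlation sum at constant expectation does follow from Lemmas \ref{lemma: mid overlap}, \ref{lemma: big overlap conn} and \ref{lemma: big overlap disconn} (for the near-full-overlap connected terms one needs the refined bound at the end of the proof of Lemma \ref{lemma: big overlap conn} when $r-\ell=1$, exactly as invoked in Lemma \ref{lemma: no big overlap}). That half is sound.

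The gap is in the $\ell = 2$ case. You correctly compute that the local-dependence Chen--Stein bound, which charges the \emph{full} joint survival probability to every pair of cycles sharing an edge, is $\Theta(1)$ for $\ell = 2$; but you then draw the wrong conclusion and abandon the subsampling route. The actual dependence between overlapping cycles under the thinning is negligible: by Lemma \ref{lemma: no big overlap} (the one ingredient your proposal never uses), with high probability every pair of distinct cycles in $G'$ shares at most $\log n$ edges, each edge is deleted with probability only $O(\log\log n/n)$, and hence the covariance of two survival indicators is $(m/m')^2(q^{-t}-1) = O\bigl((m/m')^2\, t \log\log n/n\bigr)$, which sums to $o(1)$ even for $\ell=2$. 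The paper implements this by conditioning on the w.h.p.\ event that none of the $O((Z')^2\log n)$ edges lying in two or more cycles is deleted; on that event the survival indicators are \emph{exactly} independent, so $Z(\Cnlr)$ is within $o(1)$ in total variation of $\mathrm{Bin}(Z', m/m')$ for all $\ell \geq 2$, and the entire lognormal randomness is carried by $Z'/m' \to L$ from Theorem \ref{thm: lognormal}, with Le Cam plus a bounded-continuous-function argument finishing the mixture. Your proposed fallback for $\ell=2$ --- small subgraph conditioning directly at constant expectation via joint factorial moments and a conditional-Poisson claim --- is an unexecuted and substantially harder program requiring estimates (joint $k$-th factorial moments of $Z(\Cnlr)$ with the conditioning statistics) that appear nowhere in the paper. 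It is also structurally mismatched to this setting: the relevant conditioning statistics are the path counts $Y(P_k)$, which are asymptotically Gaussian under both the null and planted measures (Lemma \ref{lemma: CLT}), not the independent Poisson short-cycle counts of the sparse random-regular-graph paradigm you are importing, so the claimed ``image of independent Poissons under $\prod_j g_j$ tends to $L$'' step does not match the mechanism that actually produces the lognormal here. As written, the $\ell = 2$ case is therefore not proved.
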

Notice that a routine application of Markov's inequality implies that $Z(\Cnlr) = 0$ with high probability when $\E{Z(\Cnlr)} \to 0$. Thus, our theorems characterize the limiting behavior of $Z(\Cnlr)$ in all regimes.

\subsection{Proof overview}
The difficulty with determining the sharp threshold for Hamilton $\ell$-cycles is that the standard second moment does not always work. Narayanan and Schacht \cite{NS:20} overcome this difficulty by estimating the second moment up to a constant factor, and applying a general purpose tool of Friedgut \cite{F:99} to show that the threshold must be sharp. A key observation of this paper, which is implicit in their handling of the second moment, is that the additional constant factor in the second moment arises from the fluctuations of a fixed collection of small subgraphs. If one were able to isolate the contribution from these small subgraphs, the second moment estimate can be made sharp. 

The tool to treat the contribution from these small subgraphs and handle situations where the second moment is off by a constant factor is known as small subgraph conditioning. The idea was introduced in a breakthrough work of Robinson and Wormald \cite{RW:94} to show that random regular graphs have Hamilton cycles with high probability. Janson \cite{J:95} realized the applicability of the technique and presented a general method to establish contiguity between null and planted distributions. The small subgraph conditioning method went on to play an important role in the analysis of many statistical physics models \cite{GSV:16, CEH:16, MWW:09, MNS:15, CEJKK:18}. On the other hand, applications to more combinatorial settings have been less frequent.

An important commonality between each of the above applications is a sparsity of the model, i.e. the number of edges is proportional to the number of vertices. Notable works of Banerjee \cite{B:18} and Abbe, Li, and Sly \cite{ALS:22} were able to implement a version of the small subgraph conditioning technique in dense models, namely the stochastic block model and symmetric perceptron respectively. 

We rely on a similar application of small subgraph conditioning in the dense setting to explain the additional constant factor in the second moment. In particular, we analyze the random variable $X = \frac{Z(\Cnlr)}{\E{Z(\Cnlr)}}\exp(-Y)$ where 
\[Y = \sum_{k=1}^{\log n} t_kY(P_k).\]
Here $P_k$ is $r$-uniform $\ell$-path with $k$ edges, and $Y(H)$ is the normalized subgraph count of $H$ in a (random) graph $G$ defined by 
\[ Y(H) = \frac{1}{\sqrt{N_H}}\sum_{H}\prod_{e \in H} \frac{\mathbbm{1}\{e \in G\} - p}{\sqrt{p(1-p)}} \]
where the sum runs over all copies of $H$ in the complete graph. Note that the normalization is chosen so that $\E{Y(H)} = 0 \text{ and } \Var{Y(H)} = 1.$

The intuition for this choice of random variable is that each $P_k$ can be viewed as a ``short segment'' of a Hamilton cycle. The appearance of each such short segment has a constant multiplicative effect on the expected number of Hamilton cycles. Thus, the total contribution heuristically takes an exponential form, and the correction $e^{-Y}$ removes this contribution. With foresight, we choose the coefficients 
\begin{equation}\label{eq: Ak}
    t_k = \frac{\sqrt{A_k c^{-k} e^{-k(r-\ell)}}}{(r-\ell)} \text{ where } A_k = \frac{\mathrm{Aut}(P_k)}{(t!(s-t)!)^k}.
\end{equation}
This is the precise quantitative version of the ``constant multiplicative effect'' alluded to above. Notice that for $k$ a sufficiently large constant in terms of $r$ and $\ell$, $A_k$ is constant (corresponding to the extra permutations arising from the two ends of the path). In particular, and importantly for our purposes, it is uniformly bounded by a function of $r$ and $\ell$.

\subsection{Organization}
In Section \ref{sec: SMC} we give a careful case analysis to get a sharp estimate on the second moment. This proves part 1 of Theorem \ref{thm: lognormal}. In Section \ref{sec: SSC} we quantify the contribution from the small subgraph conditioning modification of the random variable. Section \ref{sec: PMT} combines the two elements to prove part 2 of Theorem \ref{thm: lognormal}. Finally, we deduce Theorem \ref{thm: poisson} in Section \ref{sec: poisson}.

\subsection{Notation}
We use $\PP^*$ and $\EE^*$ to denote probability and expectation with respect to the measure $G_r(n,p)$ with a planted copy of $\Cnlr$. For the second moment, we use $\PP_t^{*2}$ and $\EE_t^{*2}$ to denote probability and expectation with respect to the measure $G_r(n,p)$ with two planted copies of $\Cnlr$, whose intersection consists of $t$ edges. We use $C_1$ and $C_2$ to enumerate over all copies of $\Cnlr$, and also $\C$ and $\CC$ to denote two independent uniformly random copies of $\Cnlr$ in the complete hypergraph. For ease of notation, sometimes we write $C$ in place of $\Cnlr$ in sub/superscripts.

\subsection*{Acknowledgements}
The author thanks Mehtaab Sawhney for helpful discussions and references. The author is supported by an NSF Graduate Research Fellowship. 

\section{Second moment computation}\label{sec: SMC}
In this section, we evaluate the second moment of $Z(\Cnlr)$. We know from \cite{NS:20} that this is simply $\E{Z(\Cnlr)}^2$ up to a constant factor. However, since we want to show a sharp second moment for $X$, we need to determine the exact constant factor. Thus, a more careful estimate of the overlap distribution is required. We begin by rewriting the second moment in terms of the equivalence classes of intersection subgraph:
\[ \sum_{t=1}^m \frac{\Prob{|\C \cap \CC| = t}}{p^t} = \sum_{F \subset \Cnlr} \frac{\Prob{\C \cap \CC \cong F}}{p^{e(F)}} \leq \sum_{F \subset \Cnlr} \frac{\E{N_F(\C \cap \CC)}}{p^{e(F)}}. \]
We will handle the various subgraphs in a few cases:
\begin{itemize}
    \item[$\boldsymbol{\cdot}$] Lemma \ref{lemma: small overlap}: if $v(F) \leq \log n$ we get the main contribution from the short segments.
    \item[$\boldsymbol{\cdot}$] Lemma \ref{lemma: mid overlap}: if $\log n < v(F) < n$ the contribution is vanishing due to the discrepancy between $\left(\frac{n}{e}\right)^k$ and $(n)_k$. 
    \item[$\boldsymbol{\cdot}$] Lemma \ref{lemma: big overlap conn}: if $v(F) = n$ and $F$ is connected, then by comparison to $F = \Cnlr$ the contribution is bounded by $\E{Z(\Cnlr)}^{-1}$.
    \item[$\boldsymbol{\cdot}$] Lemma \ref{lemma: big overlap disconn}: if $v(F) = n$ and $F$ is disconnected, the contribution is vanishing due to the limited number of copies of $F$ in $\Cnlr$. 
\end{itemize}
Each $F$ in the sum can be decomposed into its connected components as follows. For each $k$, let there be $n_k$ components isomorphic to $P_k$ in $F$. Denote the remaining isomorphism classes of components by $F_1, \ldots, F_a$, with multiplicities $m_1, \ldots, m_a$. Then 
\begin{equation}\label{eq: decomp}
    \frac{\E{N_F(\C \cap \CC)}}{p^{e(F)}} \leq \frac{(\frac{n}{r-\ell})^{2\sum_k n_k + \sum_j m_j}}{(n)_{v(F)}} \cdot \prod_k \frac{\mathrm{Aut}(P_k)^{n_k}}{n_k!}p^{-kn_k} \cdot \prod_{j=1}^a \frac{\mathrm{Aut}(F_j)^{m_j}}{m_j!}p^{-e(F_j)m_j}.
\end{equation}
We analyze this decomposition according to the cases listed above. 

\begin{lemma}\label{lemma: small overlap}
    \[ \sum_{F \subset \Cnlr, v(F) \leq \log n} \frac{\E{N_F(\C \cap \CC)}}{p^{e(F)}} \leq \exp((1+o(1))\sum_{k=1}^{\log n} \frac{A_k c^{-k} e^{-k(r-\ell)}}{(r-\ell)^2}). \]
\end{lemma}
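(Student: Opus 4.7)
The plan is to apply the decomposition (\ref{eq: decomp}) and exploit the structural fact that every proper connected subhypergraph of $\Cnlr$ is isomorphic to an $\ell$-path $P_k$. Consequently, for $F$ with $v(F) \leq \log n$ only the path multiplicities $n_k$ are nonzero, so we may take $a = 0$ in (\ref{eq: decomp}), and the sum over $F$ reduces to a sum over tuples $(n_k)_{k \geq 1}$ of nonnegative integers satisfying $\sum_k n_k(\ell + k(r-\ell)) \leq \log n$.

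Next I would carry out the bulk of the calculation by direct substitution. Since $v(F) \leq \log n$, the approximation $(n)_{v(F)} = (1+o(1))\, n^{v(F)}$ holds uniformly. Substituting $p = (c+o(1))(t!(s-t)!)\, e^{r-\ell}/n^{r-\ell}$ into (\ref{eq: decomp}), the powers of $n$ collapse to a net exponent
\[ 2\sum_k n_k - v(F) + (r-\ell)\sum_k k n_k = (2-\ell)\sum_k n_k, \]
which vanishes exactly when $\ell = 2$. Using the definition $A_k = \mathrm{Aut}(P_k)/(t!(s-t)!)^k$, the remaining constants rearrange so that the $k$-th component contributes a factor
\[ \frac{n^{(2-\ell)n_k}}{n_k!}\left(\frac{A_k c^{-k} e^{-k(r-\ell)}}{(r-\ell)^2}\right)^{n_k}. \]
Relaxing the coupling constraint $\sum_k n_k(\ell + k(r-\ell)) \leq \log n$ to free sums $n_k \geq 0$ (a valid upper bound since every term is positive) makes the total factorize over $k$. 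For $\ell = 2$, summing each index yields $\exp(\mu_k)$ with $\mu_k := A_k c^{-k} e^{-k(r-\ell)}/(r-\ell)^2$, and taking the product over $k \leq \log n$ gives precisely the exponential bound claimed. For $\ell \geq 3$, the extra factor $n^{-(\ell-2)n_k}$ drives the sum to $1+o(1)$, which is strictly stronger than the stated bound.

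The main obstacle is the bookkeeping of the accumulated $(1+o(1))$ errors -- from the approximation of $(n)_{v(F)}$, from the substitution $p = (c+o(1))p^*$, and from relaxing the coupling constraint -- so that they collectively fit inside the single $(1+o(1))$ factor in the exponent of the lemma. Two facts make this clean: the bound $v(F) \leq \log n$ caps the multiplicative error at $\exp(O(\log^2 n / n)) = 1+o(1)$, and the uniform bound on $A_k$ as a function of $r,\ell$ ensures that $\sum_{k=1}^{\infty}\mu_k$ converges to a finite positive constant. Together these imply that a $1+o(1)$ prefactor outside the exponential is equivalent to an $o(1)$ perturbation of the exponent, matching the form stated in the lemma.
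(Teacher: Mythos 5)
Your calculation for the path components is correct and matches the paper's: the net exponent $(2-\ell)\sum_k n_k$, the constants $A_k c^{-k}e^{-k(r-\ell)}/(r-\ell)^2$, the Poissonization $\sum_{n_k\ge 0}\frac{1}{n_k!}(\cdot)^{n_k}=\exp(\cdot)$ after dropping the coupling constraint, and the error bookkeeping are all as in the paper. But the proof rests on a structural claim that is false in general: it is not true that every connected subhypergraph of $\Cnlr$ is isomorphic to some $P_k$, so you cannot take $a=0$ in (\ref{eq: decomp}). Two edges of $\Cnlr$ at cyclic distance $j\ge 2$ still intersect whenever $js<r$, i.e.\ whenever $\ell>r/2$; this includes tight cycles and, within the scope $\ell=2$ of the log-normal analysis, the case $(r,\ell)=(3,2)$. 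For instance, for the tight $3$-uniform cycle the edge set $\{e_1,e_3\}$ is connected (the two edges share a vertex), spans $v(P_3)$ vertices, but has only $2$ edges and an overlap pattern different from $P_2$; such $F$ genuinely occur as components of $\C\cap\CC$ and contribute to the sum.

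The paper's proof spends its second half on exactly these components: for a connected non-path component $F_j$ one has $v(F_j)=v(P_{k_j})$ for some $k_j$ but $e(F_j)<k_j$, and the bound $\mathrm{Aut}(F_j)\le \mathrm{Aut}(P_{k_j})\cdot(r!^2)^{k_j-e(F_j)}$ shows (equation (\ref{eq: estimate})) that each missing edge costs a factor $n^{-(r-\ell+o(1))}$ relative to the full path; summing over the at most $\binom{k_j}{k}$ ways to delete $k$ edges gives the factor $(1+n^{-(r-\ell+o(1))})^{k_j}=1+o(1)$ of (\ref{eq: binomial}), so the non-path components only perturb the exponent by $o(1)$. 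Your argument as written is complete only under the additional hypothesis $r\ge 2\ell$ (where connected subgraphs of $\Cnlr$ really are contiguous arcs, hence paths); to cover all $r>\ell\ge 2$ you need to add an estimate of the above type showing that components with missing edges are negligible, rather than asserting they do not exist.
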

\begin{proof}
    Note that we have $(n)_k = (1+o(1))n^k$ for $k = o(\sqrt{n})$. Thus, 
    \begin{align}
        \sum_{\substack{F \subset \Cnlr \\ v(F) \leq \log n}} \frac{\E{N_F(\C \cap \CC)}}{p^{e(F)}} &\leq (1+o(1))\frac{(\frac{n}{r-\ell})^{2\sum_k n_k + \sum_j m_j}}{n^{v(F)}} \cdot \prod_k \frac{\mathrm{Aut}(P_k)^{n_k}}{n_k!} \cdot \prod_{j=1}^a \frac{\mathrm{Aut}(F_j)^{m_j}}{m_j!} \cdot p^{-e(F)} \nonumber\\
        &\leq (1+o(1))\prod_{\substack{F \subset \Cnlr \\ v(F) \leq \log n \\ F \text{ conn.}}} \sum_{k=1}^\infty \frac{1}{k!}\left( \frac{n^2}{(r-\ell)^2n^{v(F)}} \mathrm{Aut}(F)p^{-e(F)}\right)^k \nonumber\\
        &= (1+o(1))\prod_{\substack{F \subset \Cnlr \\ v(F) \leq \log n \\ F \text{ conn.}}} \exp\left( \frac{n^2}{(r-\ell)^2n^{v(F)}} \mathrm{Aut}(F)p^{-e(F)}\right) \nonumber\\
        &= (1+o(1)) \exp(\sum_{\substack{F \subset \Cnlr \\ v(F) \leq \log n \\ F \text{ conn.}}} \frac{n^2}{(r-\ell)^2n^{v(F)}} \mathrm{Aut}(F)p^{-e(F)} ) \label{eq: poisson}
    \end{align}
    
    We first handle the terms corresponding to $F = P_k$. For this we explicitly have $e(P_k) = k$, $v(P_k) = \ell + k(r-\ell)$, $\mathrm{Aut}(P_k) = A_k(t!(s-t)!)^k$ where $A_k$ is the correction for the edges near the ends of the path. Importantly this is independent of the length of the path. Thus, we get 
    \begin{align*}
        \frac{n^2}{(r-\ell)^2n^{v(P_k)}}\mathrm{Aut}(P_k)p^{-e(P_k)} &= \frac{n^2}{n^{\ell + (r-\ell)k}} \cdot \frac{A_k}{(r-\ell)^2} (t!(s-t)!)^k \left( \frac{n^{r-\ell}}{ct!(s-t)!e^{r-\ell}} \right)^k \\
        &\leq \frac{A_k}{(r-\ell)^2} c^{-k}e^{-k(r-\ell)}n^{2-\ell}. 
    \end{align*}
    Now consider a connected component $F_j$ not isomorphic to any $P_k$. Suppose first that $v(F_j) < n$. Then $v(F_j) = v(P_{k_j})$ for some $k_j$. We compare the contribution of $F_j$ with that of $P_{k_j}$. The key observation is that we have the bound 
    \[ \mathrm{Aut}(F_j) \leq \mathrm{Aut}(P_{k_j}) \cdot (r!^2)^{k_j - e(F_j)}. \]
    This is because for each edge removed from $P_{k_j}$, we free up the vertices within two edges to be permuted, and there are at most $r!$ ways to permute within each of these edges. Thus we can bound
    \begin{align}
        \frac{n^2}{(r-\ell)^2n^{v(F_j)}}\mathrm{Aut}(F_j)p^{-e(F_j)} &\leq \frac{n^2}{n^{\ell + (r-\ell)k_j}} \cdot \frac{A_{k_j}}{(r-\ell)^2}(t!(s-t)!)^{k_j}(r!^2)^{k_j - e(F_j)}p^{k_j - e(F_j)}p^{-k_j} \nonumber\\
        &= \frac{n^2}{n^{\ell + (r-\ell)k_j}} \cdot \frac{A_{k_j}}{(r-\ell)^2}(t!(s-t)!)^{k_j}(r!^2p)^{k_j - e(F_j)}\left( \frac{n^{r-\ell}}{ct!(s-t)!e^{r-\ell}} \right)^{k_j} \nonumber\\
        &\leq \frac{A_{k_j}}{(r-\ell)^2}c^{-k_j}e^{-k_j(r-\ell)}n^{2-\ell} \left( \frac{r!^2ct!(s-t)!e^{r-\ell}}{n^{r-\ell}} \right)^{k_j - e(F_j)} \nonumber\\
        &= \frac{A_{k_j}}{(r-\ell)^2}c^{-k_j}e^{-k_j(r-\ell)}n^{2-\ell}n^{-(r-\ell+o(1))(k_j - e(F_j))} \label{eq: estimate}
    \end{align}
    Notice here that $e(F_j) < k_j$ by assumption, so the contribution is smaller by a factor of at least $n$ for every edge that is missing from $P_{k_j}$. In particular, we have the bound
    \begin{align}
        \sum_{\substack{F \subset \Cnlr\\ v(F) = v(P_{k_j})\\ F \text{ conn.}}} \frac{n^2}{(r-\ell)^2n^{v(F)}}\mathrm{Aut}(F)p^{-e(F)} &\leq \frac{A_{k_j}}{(r-\ell)^2}c^{-k_j}e^{-k_j(r-\ell)}n^{2-\ell} \sum_{k=0}^{k_j} \binom{k_j}{k}n^{-(r-\ell+o(1))k} \nonumber\\
        &= \frac{A_{k_j}}{(r-\ell)^2}c^{-k_j}e^{-k_j(r-\ell)}n^{2-\ell} \left( 1 + n^{-(r-\ell+o(1))} \right)^{k_j} \nonumber\\
        &\leq (1+o(1))\frac{A_{k_j}}{(r-\ell)^2}c^{-k_j}e^{-k_j(r-\ell)}n^{2-\ell} \label{eq: binomial}
    \end{align}
    for $k_j \leq \log n$. Thus, all together we have
    \[ \sum_{F \subset \Cnlr, v(F) \leq \log n} \frac{\E{N_F(\C \cap \CC)}}{p^{e(F)}} \leq \exp((1+o(1))n^{2-\ell}\sum_{k=1}^{\log n} \frac{A_k}{(r-\ell)^2}c^{-k}e^{-k(r-\ell)}). \]
\end{proof}
\begin{lemma}\label{lemma: mid overlap}
    \[ \sum_{\log n < v(F) \leq n-1}\frac{\E{N_F(\C \cap \CC)}}{p^{e(F)}} \lesssim_{r,\ell} \frac{1}{n}. \]
\end{lemma}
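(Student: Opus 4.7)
My plan is to follow the same decomposition (\ref{eq: decomp}) used in the proof of Lemma \ref{lemma: small overlap} and to reuse the per-connected-component estimate established there: by (\ref{eq: binomial}), the total contribution from connected $F_j$ of vertex count $v(P_{k_j})$ is at most $(1+o(1)) \frac{A_{k_j}}{(r-\ell)^2} c^{-k_j} e^{-k_j(r-\ell)} n^{2-\ell}$. Whenever $v(F) > \log n$, either (a) some connected component $F_j$ has $v(F_j) > \log n$, i.e.\ $k_j > \log n/(r-\ell)$, or (b) every connected component is small but their combined vertex count still exceeds $\log n$. I would handle these two cases separately.

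In case (a), summing the contribution of the distinguished large component over its possible shape yields the geometric-series tail
\[ \sum_{k > \log n/(r-\ell)} \frac{A_k}{(r-\ell)^2}(ce^{r-\ell})^{-k} n^{2-\ell} = O_{r,\ell}\bigl(n^{1-\ell-\log c/(r-\ell)}\bigr). \]
In the regime $c \geq 1$ forced by $\E{Z(\Cnlr)} \to \infty$, this is $O_{r,\ell}(n^{1-\ell}) \leq O_{r,\ell}(1/n)$, and the remaining connected components multiply to at most $\exp(O_{r,\ell}(1))$ by the same exponential estimate that drives Lemma \ref{lemma: small overlap}. The case-(a) contribution is therefore $\lesssim_{r,\ell} 1/n$.

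In case (b), I would view the product expansion as a sum over independent Poisson occurrences of each small connected shape $F'$, with rates $a_{F'}$ matching the single-component bound, and apply an exponential Markov inequality: for any $\lambda > 0$,
\[ \sum_{\substack{F:\, v(F) > \log n \\ v(F_j) \leq \log n\ \forall j}} \prod_j \frac{a_{F_j}^{n_j}}{n_j!} \leq e^{-\lambda \log n} \exp\Bigl(\sum_{F'\text{ conn}} a_{F'}(e^{\lambda v(F')} - 1)\Bigr). \]
For $F' = P_k$, $a_{P_k} e^{\lambda v(P_k)}$ is of order $(c e^{(r-\ell)(1-\lambda)})^{-k}$ up to $r,\ell$-factors, and sums to $O_{r,\ell}(1)$ whenever $c e^{(r-\ell)(1-\lambda)} > 1$. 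Taking $\lambda = 1$ (valid for $c > 1$) therefore produces $\lesssim_{r,\ell} e^{-\log n} = 1/n$. Non-path connected components are absorbed by the extra $n^{-(r-\ell)+o(1)}$ per missing edge already captured in (\ref{eq: binomial}).

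The main obstacle I foresee is the marginal regime $c \to 1^+$: there the series $\sum_k c^{-k}$ blows up like $1/(c-1)$, so $\lambda = 1$ no longer yields a constant exponent. One can recover the bound by choosing $\lambda < 1$ tending to $1$ slowly enough that the Chernoff exponent stays $o(\log n)$ while losing only $o(\log n)$ in the $e^{-\lambda \log n}$ prefactor. The hypothesis $\E{Z(\Cnlr)} \to \infty$ forces $c - 1 \gg 1/n$, which leaves enough room for such a calibration.
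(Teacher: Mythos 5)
There is a genuine gap, and it sits exactly at the point this lemma is designed to handle. Both of your cases reuse the per-component rate $a_{F'}\asymp A_{k}c^{-k}e^{-k(r-\ell)}n^{2-\ell}$ from (\ref{eq: binomial}), but that estimate is obtained by replacing $(n)_{v(F)}$ with $n^{v(F)}$ in (\ref{eq: decomp}), which the proof of Lemma \ref{lemma: small overlap} justifies only for $v(F)=o(\sqrt n)$. For $\log n< v(F)\le n-1$ this replacement is a \emph{lower} bound: the omitted factor $n^{v(F)}/(n)_{v(F)}$ is $\exp((1+o(1))v(F)^2/(2n))$ for $v(F)=o(n)$ and grows to $e^{(1-o(1))v(F)}$ as $v(F)\to n$. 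Since $e^{-k(r-\ell)}=e^{\ell-v(P_k)}$, the omitted factor cancels precisely the exponential decay in $k$ that both of your cases depend on. In case (a), the corrected tail is no longer a geometric series in $(ce^{r-\ell})^{-1}$ but essentially $\sum_k A_kc^{-k}e^{\ell}$ times a Stirling residual; your bound underestimates the true contribution of a near-spanning path component, for which the residual at $v=n-1$ is $\asymp 1/n$, not $e^{-n}$. In case (b), the corrected rates are $a'_{P_k}\asymp A_kc^{-k}e^{\ell}n^{2-\ell}$, so $\sum_k a'_{P_k}e^{\lambda v(P_k)}\asymp\sum_k(e^{\lambda(r-\ell)}/c)^k$ converges only for $\lambda<\log c/(r-\ell)$, and the Chernoff bound yields at best $n^{-\log c/(r-\ell)}$ --- far from $1/n$ when $c$ is near $1$ (the lemma is later applied in Lemma \ref{lemma: no big overlap} with $c-1=\Theta(\log\log n/n)$), and vacuous for $c\le 1$. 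The recalibration of $\lambda$ you propose for the marginal regime cannot repair this, because the loss is not in the choice of $\lambda$ but in the rates themselves.

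The paper instead extracts the decay from the falling factorial rather than from the component sizes: Stirling gives $n^{v(F)}/(n)_{v(F)}\le\exp((n-v(F))\log(1-v(F)/n)+v(F))$; the factor $e^{v(F)}$ is distributed over the components, where it cancels the $e^{-v(F_j)}$'s and leaves summable rates $A_kc^{-k}e^{\ell}$ (hence an $O_{r,\ell}(1)$ total over configurations); and the leftover factor $\exp((n-t)\log(1-t/n))$ is at most $1/n$ for $t\ge\log n$ and at most $4/n^2$ for $2\log n\le t\le n-2$, which gives $O_{r,\ell}(1/n)$ after summing over $t$. Any correct proof must account for the ratio $n^{v(F)}/(n)_{v(F)}$ in this range; once you do, your two-case split collapses into the paper's argument.
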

\begin{proof}
    For the remaining cases we require a more general purpose bound on the falling factorial in this regime. Stirling's approximation yields $(n)_k \geq n^k\exp(-n((1-\frac{k}{n})\log(1-\frac{k}{n}) + \frac{k}{n}))$. Applying this estimate in (\ref{eq: decomp}), the right hand side factors according to the connected components.
    \begin{gather*}
        \frac{\E{N_F(\C \cap \CC)}}{p^{e(F)}} \leq \exp((n-v(F))\log(1-\frac{v(F)}{n}) + v(F)) \prod_k \frac{1}{n_k!}\left( \frac{n^2}{n^{v(P_k)}}\mathrm{Aut}(P_k)p^{-e(P_k)} \right)^{n_k} \\
        \cdot \prod_{j=1}^a \frac{1}{m_j!}\left( \frac{n^2}{n^{v(F_j)}}\mathrm{Aut}(F_j)p^{-e(F_j)} \right)^{m_j}.
    \end{gather*}
    Note that we can express (\ref{eq: estimate}) as 
    \[ \frac{n^2}{n^{v(F_j)}}\mathrm{Aut}(F_j)p^{-e(F_j)} \leq A_{k_j}c^{-k_j}e^{-v(F_j)}e^\ell n^{-(r-\ell+o(1))(k_j-e(F_j))}. \]
    Substituting this into the above expression yields
    \begin{gather*}
        \frac{\E{N_F(\C \cap \CC)}}{p^{e(F)}} \leq \exp((n-v(F))\log(1-\frac{v(F)}{n})) \prod_k \frac{1}{n_k!}\left( A_k c^{-k} e^\ell n^{o(1)} \right)^{n_k} \\
        \cdot \prod_{j=1}^a \frac{1}{m_j!}\left( A_{k_j} c^{-k_j} e^\ell n^{-(r-\ell+o(1))(k_j-e(F_j))} \right)^{m_j}.
    \end{gather*}
    By the same calculation as (\ref{eq: poisson}), we have the estimate
    \begin{multline*}
        \sum_{v(F) = t} \frac{\E{N_F(\C \cap \CC)}}{p^{t}} \leq \exp((n-t)\log(1-\frac{t}{n})) \\ \exp(\sum_{F \subset \Cnlr, v(F) \leq t, F \text{ conn.}} A_{k_j} c^{-k_j} e^\ell n^{-(r-\ell+o(1))(k_j-e(F))})
    \end{multline*}
    The analog of (\ref{eq: binomial}) then gives 
    \begin{equation*}
        \sum_{v(F) = t}\frac{\E{N_F(\C \cap \CC)}}{p^{e(F)}} \leq \exp((n-t)\log(1-\frac{t}{n}))\exp((1+o(1))\sum_{k=1}^{t} A_k c^{-k} e^\ell).
    \end{equation*}
    The second exponential term is uniformly bounded by a constant in $r$ and $\ell$. Moreover, for $2\log n \leq t \leq n-2$, the first exponential term is at most $\frac{4}{n^2}$, while for $\log n \leq t \leq 2\log n$ or $t = n-1$ it is at most $\frac{1}{n}$. All together, 
    \[ \sum_{\log n < v(F) \leq n-1}\frac{\E{N_F(\C \cap \CC)}}{p^{e(F)}} \lesssim_{r,\ell} \frac{1}{n}. \]
\end{proof}
\begin{lemma}\label{lemma: big overlap conn}
    \[ \sum_{\substack{F \subset \Cnlr, v(F) = n, F \text{ conn.}}} \frac{\Prob{\C \cap \CC \cong F}}{p^{e(F)}} \lesssim_{r,\ell} \frac{1}{\E{Z(\Cnlr)}}. \]
\end{lemma}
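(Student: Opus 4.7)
My plan is to isolate the dominant contribution from $F = \Cnlr$ and show the remaining terms form a lower-order sum. Since $\C, \CC$ are independent uniform copies of $\Cnlr$, $\Prob{\C \cap \CC \cong \Cnlr} = \Prob{\C = \CC} = 1/N_{\Cnlr}$, and combining with $\E{Z(\Cnlr)} = N_{\Cnlr}\, p^{m}$ (where $m = n/(r-\ell)$) gives that this term equals exactly $1/\E{Z(\Cnlr)}$.

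For any connected spanning $F \subsetneq \Cnlr$, let $j = m - e(F) \geq 1$. The spanning constraint forces $r \leq 2\ell$ (otherwise each edge has interior vertices not covered by its neighbors in the cycle), and the combination of spanning and connectedness severely restricts the set of deleted edges. I would then apply the decomposition (\ref{eq: decomp}) with a single connected component of order $n$, and use $n!\, p^m = \mathrm{Aut}(\Cnlr)\, \E{Z(\Cnlr)}$ to rewrite
\[
\frac{\E{N_F(\C \cap \CC)}}{p^{e(F)}} \leq \frac{(n/(r-\ell))^{\beta}}{\E{Z(\Cnlr)}} \cdot \frac{\mathrm{Aut}(F)}{\mathrm{Aut}(\Cnlr)} \cdot p^{j},
\]
where $\beta$ is $1$ or $2$ depending on whether the single component is path-like.

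The crux is the automorphism ratio $\mathrm{Aut}(F)/\mathrm{Aut}(\Cnlr)$. Two effects compete here: removing $j$ edges frees at most $(r!)^{2j}$ local vertex permutations (as in Lemma~\ref{lemma: small overlap}), but any edge removal destroys the $m$-fold cyclic rotation symmetry of $\Cnlr$, giving $\mathrm{Aut}(F)/\mathrm{Aut}(\Cnlr) \lesssim_{r,\ell} 1/n$. Combined with $p \lesssim_{r,\ell} n^{-(r-\ell)}$, each $F$ contributes at most $\lesssim_{r,\ell} n^{\beta - 1 - (r-\ell)j}/\E{Z(\Cnlr)}$, which is $O_{r,\ell}(1/\E{Z(\Cnlr)})$ for any $\beta \leq 2$, $j \geq 1$, and $r - \ell \geq 1$. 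Summing over the possible isomorphism classes of $F$ (polynomial in $n$ for each $j$, but dominated by the decaying per-class bound for $j \geq 2$, with a bounded number of classes for $j = 1$) yields the claimed estimate.

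The main obstacle is the automorphism estimate: the cruder bound $\mathrm{Aut}(F) \leq \mathrm{Aut}(\Cnlr) (r!)^{2j}$ used in Lemma~\ref{lemma: small overlap} is insufficient here, and one must argue separately that the $m$ cyclic rotations of $\Cnlr$ all cease to be automorphisms of $F$ as soon as any single edge is removed, so as to recover the crucial factor of $1/n$ that absorbs the $(n/(r-\ell))^\beta$ prefactor.
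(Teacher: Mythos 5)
Your overall strategy --- isolating $F = \Cnlr$, which contributes exactly $1/\E{Z(\Cnlr)}$, and controlling the remaining connected spanning subgraphs via local permutations within the missing edges --- is the same as the paper's, but the step you single out as the crux is false as stated. It is not true that $\mathrm{Aut}(F)/\mathrm{Aut}(\Cnlr) \lesssim_{r,\ell} 1/n$ for every proper connected spanning $F$: the $m$ rotations do not all cease to be automorphisms once an edge is removed, because a \emph{periodic} pattern of removed edges retains a large rotation subgroup. For instance, with $r=3$, $\ell=2$ and $n$ even, deleting every other edge of the tight cycle leaves a spanning, connected loose cycle with $\mathrm{Aut}(F)/\mathrm{Aut}(\Cnlr) = 1/2$. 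The correct statement is that the dihedral stabilizer of a pattern of $j$ missing edges has order at most $2j$, so $\mathrm{Aut}(F) \leq \frac{2j\,(r!)^{2j}}{2m}\mathrm{Aut}(\Cnlr)$; dually, such a symmetric $F$ has only $2m/|\mathrm{stab}(F)|$ placements inside $\C$, so the placement count and the automorphism ratio are coupled through the stabilizer and you cannot take the worst case of each independently, as your use of $\beta=2$ together with the $1/n$ ratio does. The paper sidesteps this bookkeeping entirely by counting pairs directly: given $\C$, there are at most $m$ positions for $F$ inside it and at most $(r!)^{m-e(F)}$ cycles $\CC$ containing a fixed such $F$, so no automorphism ratio ever appears.

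The second gap is the sum over $j = m - e(F)$, which runs up to $m = n/(r-\ell)$. Your per-class bound hides a constant of order $(r!)^{2j}(c\lambda e^{r-\ell})^{j}$, and the number of classes with $j$ gaps grows like $\binom{m-1}{j-1}$; saying the sum is ``dominated by the decaying per-class bound for $j \geq 2$'' does not control $\sum_{j \geq 2}$, and indeed when $r-\ell = 1$ each fixed $j$ contributes $\Theta_j(1)\cdot\E{Z(\Cnlr)}^{-1}$ with no decay in $n$. One needs the binomial structure
\[ \sum_{j \geq 1}\binom{m-1}{j-1}(Kp)^{j} = Kp\,(1+Kp)^{m-1} \leq Kp\,e^{Kpm}, \qquad pm = O(n^{1-(r-\ell)}) = O(1), \]
which is exactly how the paper closes the argument. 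Your observation that the sum is empty of proper subgraphs unless $r \leq 2\ell$ is correct but unnecessary once these two points are repaired.
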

\begin{proof}
    We now handle the terms that have a large number of vertices, and show that they contribute negligibly. We start with the case that $F$ is a single connected spanning component. For this proof we write $m = \frac{n}{r-\ell}$ for the number of edges in $\Cnlr$. To start we have the bound
    \begin{align*}
        \Prob{\C \cap \CC \cong F}p^{-e(F)} &\leq \Prob{\C \cap \CC \cong \Cnlr}m(r!)^{m-e(F)} p^{-m}p^{m-e(F)} \leq \frac{m(r!)^{m-e(F)} p^{m-e(F)}}{\E{Z(\Cnlr)}}.
    \end{align*}
    The term $m$ comes from the choice of which copy of $F$ in $\C$ to overlap. Given this choice of $F$, there are at most $(r!)^{m-e(F)}$ cycles $\CC$ that contain it, coming from arbitrarily permuting the vertices in each of the missing edges. Moreover, there are at most $\binom{m-1}{k-1}$ choices of $F$ such that $m - e(F) = k$. Thus, the total contribution of all connected spanning subgraphs $F$ is bounded by 
    \begin{align*}
        \sum_{\substack{F \subset \Cnlr \\ v(F) = n \\ F \text{ conn.}}} \frac{\Prob{\C \cap \CC \cong F}}{p^{e(F)}} &\leq \frac{1}{\E{Z(\Cnlr)}}\left(1 + \sum_{k=1}^m \binom{m-1}{k-1}m(r!)^{k}p^{k} \right) \\
        &= \frac{1}{\E{Z(\Cnlr)}}\left(1 + \frac{ct!(s-t)!(r!)e^{r-\ell}}{(r-\ell)n^{r-\ell-1}}\sum_{k=1}^m \binom{m-1}{k-1}(r!)^{k-1}p^{k-1} \right) \\
        &= \frac{1}{\E{Z(\Cnlr)}}\left(1 + \frac{ct!(s-t)!(r!)e^{r-\ell}}{(r-\ell)n^{r-\ell-1}}(1 + r!p)^{m-1} \right) \\
        &\leq \frac{1}{\E{Z(\Cnlr)}}\left(1 + \frac{ct!(s-t)!(r!)e^{r-\ell}}{(r-\ell)n^{r-\ell-1}}e^{r!p(m-1)} \right) \\
        &\lesssim_{r,\ell} \frac{1}{\E{Z(\Cnlr)}}.
    \end{align*}
    This suffices to prove the claim, but we give here a stronger bound in the case $r-\ell=1$ for future utility. The key observation is that in the case $r-\ell=1$, we cannot have two Hamilton $\ell$-cycles that overlap in $\Cnlr$ minus a single edge. Indeed, for any pair of vertices there are at least two edges separating them, so any swap of vertices impacts at least two edges. By this observation, there are at most $\binom{m-1}{k-2}$ ways to choose an $F$ missing $k$ edges rather than $\binom{m-1}{k-1}$. Using this new bound we get
    \begin{align*}
        \sum_{\substack{F \subset \Cnlr \\ v(F) = n \\ F \text{ conn.}}} \frac{\Prob{\C \cap \CC \cong F}}{p^{e(F)}} &\leq \frac{1}{\E{Z(\Cnlr)}}\left(1 + \sum_{k=1}^m \binom{m-1}{k-2}m(r!)^{k}p^{k} \right) \\
        &= \frac{1}{\E{Z(\Cnlr)}}\left(1 + \frac{(ct!(s-t)!r!e)^2}{(r-\ell)n}\sum_{k=1}^m \binom{m-1}{k-2}(r!)^{k-2}p^{k-2} \right) \\
        &\leq \frac{1}{\E{Z(\Cnlr)}}\left(1 + O(\frac{1}{n})\right).
    \end{align*}
\end{proof}
\begin{lemma}\label{lemma: big overlap disconn}
    \[ \sum_{v(F) = n, F \text{ disconn.}} \frac{\E{N_F(\C \cap \CC)}}{p^{e(F)}} \lesssim_{r,\ell} \frac{1}{n^2}. \]
\end{lemma}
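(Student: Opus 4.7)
The strategy parallels Lemma \ref{lemma: big overlap conn}. I would classify each disconnected spanning $F$ by its number of components $C \geq 2$ and its number of missing edges $k = m - e(F)$. The crucial combinatorial observation is that two edges $E_i, E_j$ of $\Cnlr$ share a vertex iff $|i-j| \leq d := \lfloor (r-1)/s \rfloor$, so separating the cyclic edge sequence of $\Cnlr$ into $C$ components requires $C$ disjoint ``gaps'' of at least $d$ consecutive missing edges each, forcing $k \geq Cd$. For $\ell \geq 2$ one has $s \leq r-2$, which forces $sd \geq 2$ (checking the two regimes $s \mid r-1$ and $s \nmid r-1$ separately), so $p^d \lesssim_{r,\ell} n^{-2}$. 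This is the slack that makes each extra component cost a factor of roughly $n^{-2}$.

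Following Lemma \ref{lemma: big overlap conn}, I would bound, for each iso class $F$ with $C$ components and $k$ missing edges, the number of placements $N_F(\Cnlr)$ by choosing $C$ cyclic gap positions around $\Cnlr$ and a composition of $k$ into $C$ parts of size at least $d$, giving $N_F(\Cnlr) \lesssim \binom{m}{C} \binom{k-1}{C-1}$. I would bound the number of Hamilton cycle extensions of $F$ by $(r!)^k \cdot 2^C (C-1)!$, accounting for permutations of vertices within each missing edge together with cyclic reorderings and orientations of the $C$ components. Combining these estimates analogously to the connected case yields
\[
\sum_{\substack{F \subset \Cnlr,\ v(F) = n \\ F \text{ disc.}}} \frac{\E{N_F(\C \cap \CC)}}{p^{e(F)}} \lesssim_{r,\ell} \frac{1}{\E{Z(\Cnlr)}} \sum_{C \geq 2} \sum_{k \geq Cd} \binom{m}{C} k^{C-1} (C-1)!\, 2^C (r!)^k p^k.
\]

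The inner sum in $k$ is geometric with ratio $r!p = O(n^{-s}) = o(1)$ and is dominated by $k = Cd$, contributing $\lesssim_{r,\ell,C} (m p^d)^C$. Since $m p^d \lesssim n^{1-sd} \leq n^{-1}$ by $sd \geq 2$, the $C = 2$ term gives the leading $O(n^{-2})$ while each $C \geq 3$ term contributes $O(n^{-C})$; the resulting series in $C$ converges and is dominated by $C = 2$. Together with $1/\E{Z(\Cnlr)} = O(1)$ in the regime $\E{Z(\Cnlr)} \to \infty$ relevant to the main theorem, this yields the claimed bound.

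The main obstacle is the bookkeeping for connected components that are not full subpaths of $\Cnlr$ (the non-path pieces $F_j$ in the decomposition (\ref{eq: decomp})): such components arise when an internal edge of a would-be arc is missing yet the vertex set still spans because the ``uncovered'' vertices are picked up by neighboring edges. Each such internal missing edge costs an additional factor of $r! p = O(n^{-s})$, so these configurations are strictly sub-leading relative to the path-only case and are absorbed into the same $n^{-2}$ slack.
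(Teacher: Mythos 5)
Your argument is correct in substance, but it takes a genuinely different route from the paper. The paper's proof is a two-sentence observation about the factorization (\ref{eq: decomp}): that bound allots roughly $n^2/(r-\ell)^2$ placements to every component (its position in each of $\C$ and $\CC$), but when $v(F)=n$ the components tile the vertex set, so the position of the last component is forced in both cycles; hence the generic estimate --- which the computation in Lemma \ref{lemma: mid overlap} already shows is $O_{r,\ell}(1)$ when evaluated at $v(F)=n$ --- overcounts by a factor of $n^2$, with no structural analysis of $F$ at all. You instead never use $v(F)=n$ and extract the gain from disconnection itself: each of the $C\ge 2$ separating gaps forces at least $d=\lfloor (r-1)/s\rfloor$ consecutive missing edges, and $sd = r-1-((r-1)\bmod s)\ge r-s=\ell\ge 2$ makes $m(r!p)^d\lesssim n^{1-sd}\le n^{-1}$ per gap. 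Both approaches work. The paper's is shorter and sidesteps the cyclic-adjacency combinatorics entirely; yours is more explicit, yields per-component decay $n^{-C}$, and carries an extra factor $1/\E{Z(\Cnlr)}$ --- which is why you need $\E{Z(\Cnlr)}$ bounded below, a harmless caveat since the lemma is only invoked in regimes where $\E{Z(\Cnlr)}\to\infty$.

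Two small points of bookkeeping. First, in your closing paragraph an internal missing gap does not merely cost $r!p$: its position along the cycle must also be chosen, contributing a factor of order $m$, so for $s=1$ (tight cycles) each internal gap costs $\Theta_{r,\ell}(1)$ rather than $o(1)$. The sum over the number $j$ of internal gaps still converges because of the $1/j!$ coming from $\binom{m}{C+j}$, exactly as the $(1+r!p)^{m-1}=O_{r,\ell}(1)$ factor is controlled in Lemma \ref{lemma: big overlap conn}, so the conclusion survives, but the configurations are not ``strictly sub-leading'' when $s=1$. Second, your quantity $\binom{m}{C}\binom{k-1}{C-1}$ bounds the total $\sum_F N_F(\Cnlr)$ over all isomorphism classes with given parameters $(C,k)$ rather than $N_F(\Cnlr)$ for each fixed $F$; that aggregate count is what your final display actually uses, so no harm is done, but it is worth saying so explicitly.
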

\begin{proof}
    The only case that remains is when $F$ consists of $n$ vertices but multiple components. To handle this, we exploit the fact that in this case, the factorization (\ref{eq: decomp}) is off by a factor of at least $n^2$. Indeed, when choosing the locations of each component, since the subgraph covers all of the vertices, the last component is fixed and has no choices left. Thus, we gain a factor of $n^2$ over the above estimate, and so 
    \[ \sum_{v(F) = n, F \text{ disconn.}} \frac{\E{N_F(\C \cap \CC)}}{p^{e(F)}} \lesssim_{r,\ell} \frac{1}{n^2}. \]
\end{proof}
Summing the inequalities yields the following (sharp) estimate on the second moment of $Z(\Cnlr)$. We will however need to use the refined cases above in the next section. 
\begin{proposition}\label{prop: second moment}
    When $\E{Z(\Cnlr)} \to \infty$, 
    \[ \frac{\E{Z(\Cnlr)^2}}{\E{Z(\Cnlr)}^2} = \sum_{t=1}^{m} \frac{\Prob{|\C \cap \CC| = t}}{p^t} \leq \exp((1+o(1))n^{2-\ell}\sum_{k=1}^{\log n} \frac{A_k c^{-k} e^{-k(r-\ell)}}{(r-\ell)^2}). \]
\end{proposition}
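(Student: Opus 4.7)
The plan is to combine the four preceding lemmas, which exhaustively partition the sum $\sum_F \E{N_F(\C \cap \CC)}/p^{e(F)}$ according to the size and connectedness of the intersection subgraph $F$. Summing their conclusions gives
\[
\sum_{F \subset \Cnlr} \frac{\E{N_F(\C \cap \CC)}}{p^{e(F)}} \leq \exp\!\Bigl((1+o(1))n^{2-\ell}\sum_{k=1}^{\log n} \frac{A_k c^{-k} e^{-k(r-\ell)}}{(r-\ell)^2}\Bigr) + O_{r,\ell}\!\Bigl(\tfrac{1}{n}\Bigr) + O_{r,\ell}\!\Bigl(\tfrac{1}{\E{Z(\Cnlr)}}\Bigr).
\]

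The first step is to argue the two error terms can be absorbed into the leading exponential. Since the exponent is nonnegative, the leading term is at least $1$, so it suffices to show that all additive errors are $o(1)$. The $O(1/n)$ term from Lemmas \ref{lemma: mid overlap} and \ref{lemma: big overlap disconn} is clearly $o(1)$, and the $O(1/\E{Z(\Cnlr)})$ term from Lemma \ref{lemma: big overlap conn} is $o(1)$ precisely by the hypothesis $\E{Z(\Cnlr)} \to \infty$. Both can then be folded into the $(1+o(1))$ factor in the exponent as follows: when $\ell \geq 3$, $n^{2-\ell}\to 0$, so the exponential itself equals $1+o(1)$ and the total is $1+o(1) = \exp(o(1)\cdot n^{2-\ell}\sum_k\cdots)$ (after adjusting the $o(1)$); when $\ell = 2$, the exponent $\sum_k A_k c^{-k}e^{-k(r-\ell)}/(r-\ell)^2$ converges (since $A_k$ is uniformly bounded and $c^{-k}e^{-k(r-\ell)}$ decays geometrically), so the leading term is a constant and an additive $o(1)$ error is absorbed harmlessly.

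The second step is to identify the left-hand side. Grouping copies of $\Cnlr$ by the isomorphism type of their intersection gives
\[
\sum_{t=1}^{m} \frac{\Prob{|\C \cap \CC| = t}}{p^t} = \sum_{F \subset \Cnlr} \frac{\Prob{\C \cap \CC \cong F}}{p^{e(F)}} \leq \sum_{F \subset \Cnlr}\frac{\E{N_F(\C \cap \CC)}}{p^{e(F)}},
\]
and the left side equals $\E{Z(\Cnlr)^2}/\E{Z(\Cnlr)}^2$ by the standard computation of the second moment in terms of overlap probabilities.

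There is no real obstacle here — the entire content of the proposition is packaged into the four lemmas, and this proof is just bookkeeping. The only subtlety to watch is the case split between $\ell = 2$ (where the exponent stays of order one and the exponential is genuinely a nontrivial constant) and $\ell \geq 3$ (where $n^{2-\ell}\to 0$ forces the exponential to tend to $1$); in both subcases the absorption of the $o(1)$ error into $(1+o(1))$ in the exponent goes through without issue.
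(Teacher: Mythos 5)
Your proposal is correct and matches the paper's treatment exactly: the paper offers no separate proof of the proposition beyond the remark that ``summing the inequalities'' of Lemmas \ref{lemma: small overlap}--\ref{lemma: big overlap disconn} yields it, which is precisely the bookkeeping you carry out. If anything you are more careful than the paper, since you explicitly note that absorbing the additive $O(1/n)$ and $O(1/\E{Z(\Cnlr)})$ errors into the exponent requires the hypothesis $\E{Z(\Cnlr)}\to\infty$ and behaves differently in the two regimes $\ell=2$ and $\ell\geq 3$.
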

Notice that when $\ell \geq 3$, the upper bound on the right hand side is $1+o(1)$. A standard application of Chebyshev's inequality proves part 1 of Theorem \ref{thm: lognormal}.

\section{Small subgraph conditioning}\label{sec: SSC}
In this section, we set up the small subgraph conditioning and explain the improvement over the vanilla second moment. Since this improvement is only needed in the case $\ell = 2$, we will assume this for the next two sections. The main claim is the following lemma. 
\begin{lemma}\label{lemma: second moment}
    \[ \frac{\E{X^2}}{\E{X}^2} = \sum_{t=1}^{m} \frac{\Prob{|\C \cap \CC| = t}}{p^t} \cdot \frac{\EE_t^{*2}[e^{-2Y}]}{\EE^*[e^{-Y}]^2}. \]
\end{lemma}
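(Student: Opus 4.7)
The plan is to derive the identity directly from the definition $X = \frac{Z(\Cnlr)}{\E{Z(\Cnlr)}} e^{-Y}$ by expanding $Z(\Cnlr)$ and $Z(\Cnlr)^2$ as sums over indicators of cycle containment, swapping sums with expectations, and then using the vertex-permutation symmetry of $G_r(n,p)$ to collapse the resulting terms.

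First, writing $m$ for the number of edges in $\Cnlr$ and $Z(\Cnlr) = \sum_C \mathbbm{1}\{C \subset G_r(n,p)\}$, linearity of expectation gives
\[ \E{X} = \frac{1}{\E{Z(\Cnlr)}} \sum_C \Prob{C \subset G_r(n,p)} \cdot \E{e^{-Y} \mid C \subset G_r(n,p)}. \]
Each term equals $p^m \cdot \EE^*[e^{-Y}]$: the conditional measure is exactly the planted measure $\PP^*$, and by symmetry the planted expectation does not depend on which copy $C$ is planted. Combined with $\E{Z(\Cnlr)} = N_{\Cnlr} p^m$ this gives $\E{X} = \EE^*[e^{-Y}]$. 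The same recipe applied to $Z(\Cnlr)^2 = \sum_{C_1, C_2} \mathbbm{1}\{C_1 \cup C_2 \subset G_r(n,p)\}$ yields
\[ \E{X^2} = \frac{1}{\E{Z(\Cnlr)}^2} \sum_{C_1, C_2} p^{2m - |C_1 \cap C_2|} \cdot \EE_{(C_1, C_2)}^{*2}[e^{-2Y}], \]
where $\EE^{*2}_{(C_1,C_2)}$ denotes expectation under $G_r(n,p)$ conditioned on planting both $C_1$ and $C_2$. Grouping pairs by $t = |C_1 \cap C_2|$ and using that $\EE_{(C_1, C_2)}^{*2}[e^{-2Y}]$ depends only on the isomorphism type of the overlap (so in particular is summarized by a single quantity $\EE_t^{*2}[e^{-2Y}]$ after averaging over overlap types sharing a given $t$), together with the identity $\sum_{(C_1, C_2) : |C_1 \cap C_2| = t} 1 = N_{\Cnlr}^2 \cdot \Prob{|\C \cap \CC| = t}$, produces
\[ \frac{\E{X^2}}{\E{Z(\Cnlr)}^2 / \E{Z(\Cnlr)}^2} = \sum_{t=1}^m \frac{\Prob{|\C \cap \CC| = t}}{p^t} \, \EE_t^{*2}[e^{-2Y}]. \]
Dividing by $\E{X}^2 = \EE^*[e^{-Y}]^2$ yields the claim.

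The only place where care is needed is the reduction of $\EE^{*2}_{(C_1, C_2)}[e^{-2Y}]$ to a function of the overlap size $t$ alone, since a priori the distribution of $Y$ under the two-cycle planted measure depends on the full isomorphism class of $C_1 \cap C_2$ rather than merely its edge count. The clean fix, which I would adopt, is to reindex the outer sum by isomorphism classes $F$ of the intersection in the spirit of Section~\ref{sec: SMC}, proving the refined identity
$\E{X^2}/\E{X}^2 = \sum_F \Prob{\C \cap \CC \cong F} p^{-e(F)} \EE^{*2}_F[e^{-2Y}]/\EE^*[e^{-Y}]^2$, and then reading $\EE_t^{*2}[e^{-2Y}]$ as the appropriate weighted average over all $F$ with $e(F) = t$. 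I do not anticipate any further obstacles: the remainder of the argument uses nothing beyond linearity and the defining symmetry of $G_r(n,p)$.
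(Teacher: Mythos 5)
Your proposal is correct and follows essentially the same route as the paper's own proof: both expand $Z(\Cnlr)$ and $Z(\Cnlr)^2$ over copies, identify the resulting conditional measures with the planted and doubly planted measures, and group pairs of cycles by intersection size. The subtlety you flag --- that the doubly planted expectation depends a priori on the isomorphism class of $\C \cap \CC$ rather than just $t$ --- is resolved in the paper exactly as in your ``weighted average'' reading, since $\EE_t^{*2}[e^{-2Y}]$ is there defined as the average of the conditional expectation over all pairs of copies whose intersection has $t$ edges.
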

Notice that compared to the vanilla second moment on $Z(\Cnlr)$, the right hand side has an additional term defined by the moment generating function of $Y$ under a planted distribution. As will be shown later in Lemma \ref{lemma: CLT}, the additional term contributes a small (less than 1) constant factor for $t$ small, and a large constant factor for $t$ large. The key to the argument is that the small constant factor exactly cancels the extra constant factor in the vanilla second moment, and the large constant factor is negligible compared to the probability of large overlap.
\begin{proof}
    Let $p(G) = \Prob{G_r(n,p) = G} = p^{e(G)}(1-p)^{\binom{n}{r}-e(G)}$. We write 
    \begin{align*}
        \E{Z(\Cnlr)e^{-Y}} &= \sum_G p(G) Z(\Cnlr)e^{-Y(G)} = \sum_G \sum_{C_1} p(G) \mathbbm{1}\{C_1 \subset G\} e^{-Y(G)}
    \end{align*}
    where the first sum is over all graphs $G$. Note that 
    \[ \EE^*[e^{-Y}] = \frac{1}{N_C} \sum_{C_1} \frac{\sum_G \mathbbm{1}\{C_1 \subset G\} p(G) e^{-Y(G)}}{\sum_G \mathbbm{1}\{C_1 \subset G\} p(G)} = \frac{1}{\E{N_C(G)}} \sum_{C_1} \sum_G \mathbbm{1}\{C_1 \subset G\}p(G) e^{-Y(G)}. \]
    This implies 
    \[ \E{Z(\Cnlr)e^{-Y}} = \E{Z(\Cnlr)}\EE^*[e^{-Y}]. \]
    We perform a similar transformation for the second moment. 
    \begin{align*}
        \E{Z^2e^{-2Y}} &= \sum_G \sum_{C_1, C_2} p(G)\mathbbm{1}\{C_1 \cup C_2 \subset G\}e^{-2Y(G)} \\ 
        &= \sum_{k = 0}^{e(\Cnlr)} \sum_{e(C_1 \cap C_2) = k} \sum_G p(G) \mathbbm{1}\{C_1 \cup C_2 \subset G\}e^{-2Y(G)}.
    \end{align*}
    This time, letting $N_C^{(2)}(k)$ be the number of pairs of copies of $\Cnlr$ which intersect at $k$ edges, we have 
    \begin{align*}
        \EE_k^{*2}[e^{-2Y}] &= \frac{1}{N_C^{(2)}(k)} \sum_{e(C_1 \cap C_2) = k} \frac{ \sum_G p(G) \mathbbm{1}\{C_1 \cup C_2 \subset G\} e^{-2Y(G)} }{\sum_G p(G) \mathbbm{1}\{C_1 \cup C_2 \subset G\}} \\
        &= \frac{1}{N_C^2 \Prob{e(C_1 \cap C_2) = k} p^{2e(C) - k}} \sum_{e(C_1 \cap C_2) = k}\sum_G p(G) \mathbbm{1}\{C_1 \cup C_2 \subset G\}e^{-2Y(G)} \\
        &= \frac{1}{\E{N_C(G)}^2\Prob{e(C_1 \cap C_2) = k} p^{-k}} \sum_{e(C_1 \cap C_2) = k}\sum_G p(G) \mathbbm{1}\{C_1 \cup C_2 \subset G\}e^{-2Y(G)}
    \end{align*}
    Substituting this in yields
    \begin{align*}
        \E{Z(\Cnlr)^2e^{-2Y}} = \E{Z(\Cnlr)}^2 \sum_{k=0}^{e(\Cnlr)} \Prob{e(\C \cap \CC) = k}p^{-k}\EE_k^{*2}[e^{-2Y}]
    \end{align*}
    The conclusion follows by plugging in the first moment expression. 
\end{proof}

Following Lemma \ref{lemma: second moment}, to complete the ``conditioning'' portion of the method we need to evaluate the moment generating function of $Y$ with respect to various planted distributions. The next lemma shows that the variable $Y$ converges to a Gaussian random variable, and thus the moment generating functions will be easy to evaluate. 
\begin{lemma}\label{lemma: CLT}
    Under the null model $\PP$, for any integer $k$ as $n \to \infty$, 
    \[ (Y(P_1), Y(P_2), \ldots, Y(P_k)) \to \mathcal{N}(0, I_k). \]
    Under the planted model $\PP^*$, for any integer $k$ as $n \to \infty$, 
    \[ (Y(P_1), Y(P_2), \ldots, Y(P_k)) \to \mathcal{N}(\mu, I_k) \]
    where $\mu_j = \ \frac{\sqrt{A_j c^{-j} e^{-j(r-\ell)}}}{(r-\ell)}$.
    Under the double planted model $\PP_t^{*2}$, for any integer $k$ and $t = o(n)$ as $n \to \infty$, 
    \[ (Y(P_1), Y(P_2), \ldots, Y(P_k)) \to \mathcal{N}(2\mu, I_k). \]
\end{lemma}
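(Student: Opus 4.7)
My plan is to establish all three convergences by the method of moments, computing the joint moments of $(Y(P_1), \ldots, Y(P_k))$ under each model and showing they converge to the corresponding multivariate Gaussian moments. Write $\xi_e = \frac{\mathbbm{1}\{e \in G\} - p}{\sqrt{p(1-p)}}$. Under $\PP$, the $\{\xi_e\}$ are independent with mean $0$, variance $1$, and $q$-th absolute moment $O(p^{1-q/2}) = O(n^{(r-\ell)(q-2)/2})$. Under $\PP^*$, $\xi_e = \sqrt{(1-p)/p}$ deterministically for $e \in C$, while $\xi_e$ is null-random for $e \notin C$; under $\PP_t^{*2}$ the planted set $C$ is replaced by $C_1 \cup C_2$ of size $2m - t = 2m(1+o(1))$.

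For the null model, expand $\E{\prod_i Y(P_{k_i})}$ as a sum over $m$-tuples of copies; by independence, each tuple contributes $\prod_e \E{\xi_e^{n_e}}$, which vanishes unless every edge has multiplicity at least $2$. The leading configurations are pair-matchings where $H_i = H_j$ on each pair (forcing $k_i = k_j$), contributing a factor $1$ per pair. The number of such matchings exactly reproduces the Wick moments of $\mathcal{N}(0, I_k)$. Non-leading configurations (partial overlaps between two copies, three-way overlaps, etc.) are controlled by a counting argument analogous to Section~\ref{sec: SMC}: each extra edge identification among copies costs $n^{-(r-\ell)}$ in combinatorics while higher moments of $\xi_e$ contribute only $O(n^{(r-\ell)(q-2)/2})$, which falls short of compensating and leaves an $n^{-\Omega(1)}$ deficit.

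For the planted model, compute $\E{Y(P_k)}$ directly: only copies $H \subseteq C$ contribute, and $\Cnlr$ with $m = n/(r-\ell)$ edges contains exactly $m$ sub-paths of $k$ consecutive edges, so
\[ \E{Y(P_k)} = \frac{m}{\sqrt{N_{P_k}}}\left(\frac{1-p}{p}\right)^{k/2}. \]
Substituting $v(P_k) = \ell + k(r-\ell)$, $\mathrm{Aut}(P_k) = A_k(t!(s-t)!)^k$, and $p = (1+o(1))\frac{c\,t!(s-t)!\,e^{r-\ell}}{n^{r-\ell}}$, the powers of $n$ collapse to $\mu_k(1+o(1))$ for $\ell = 2$. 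For higher joint moments, the pair-matching analysis extends: each $H_i$ either lies entirely in $C$ (contributing $\mu_{k_i}$ through its deterministic factor) or contains non-planted edges that must pair with non-planted edges of some $H_j$. The resulting diagrams enumerate the Wick moments of $\mathcal{N}(\mu, I_k)$; in particular, the off-diagonal $\E{Y(P_j)Y(P_k)}$ for $j \neq k$ is dominated by pairs with both $H_1, H_2 \subseteq C$, contributing $m^2/\sqrt{N_{P_j}N_{P_k}} \cdot ((1-p)/p)^{(j+k)/2} = \mu_j \mu_k$, so that $\mathrm{Cov}(Y(P_j), Y(P_k)) \to 0$. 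The double-planted argument is identical with $C$ replaced by $C_1 \cup C_2$: the number of $P_k$-copies inside $C_1 \cup C_2$ is $2m + O(t) = 2m(1+o(1))$ (each $C_i$ contributes $m$ copies, $O(t)$ copies lie inside $C_1 \cap C_2$ and are double-counted, and $O(t)$ bridging copies pass through the intersection), giving $\E{Y(P_k)} \to 2\mu_k$, while the covariance analysis is unchanged.

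The main obstacle is the uniform bound on non-leading diagrams in the higher-moment expansion — in particular, showing that configurations where three or more copies share edges, or where two copies of different lengths overlap on some but not all of their non-planted edges, contribute $o(1)$ after all normalizations. This requires bookkeeping parallel to Lemmas~\ref{lemma: small overlap}--\ref{lemma: big overlap disconn}, with the additional observation that the higher moments of $\xi_e$ beyond the second give only $O(p^{1-q/2})$, which is just short of compensating a full factor of $n^{r-\ell}$ per extra edge identification; the analogous slack must be tracked in the mixed planted-random setting. Once this control is in place, the Wick-type enumeration of the leading diagrams is routine.
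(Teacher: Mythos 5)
Your route is genuinely different from the paper's: for the null model the paper does not run a method of moments at all but simply invokes Janson's CLT for orthogonally decomposed subgraph counts \cite[Theorem 3, Remark 5.1]{J:94}, and for the planted models it argues that the orthogonal-decomposition framework still applies (the coefficients are isomorphism-invariant) and then only computes means, variances and covariances. Your leading-order computations agree with the paper's: the mean $m N_{P_k}^{-1/2}((1-p)/p)^{k/2} \to \mu_k$ (for $\ell=2$; in general there is an extra factor $n^{1-\ell/2}$, which is why the lemma lives in the $\ell=2$ sections), the doubling $2m(1+o(1))$ under $\PP_t^{*2}$ with $t=o(n)$, and the vanishing covariances. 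A self-contained moment computation would buy independence from the black-box CLT, at the cost of redoing diagram estimates that the citation handles for free.

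The gap is in the error control, which you correctly flag as the main obstacle but whose stated ledger does not close. You charge $n^{-(r-\ell)}$ per edge identification against a moment gain of $O(n^{(r-\ell)(q-2)/2})$; for the basic bad configuration in the fourth moment --- two matched pairs forced to share a single edge, so $q=4$ --- this reads as cost $n^{-(r-\ell)}$ versus gain $n^{r-\ell}$, which balances exactly rather than leaving a deficit. The actual saving is that the \emph{first} shared edge of an overlap component identifies $r$ vertices, not $r-\ell$, costing $n^{-r}$ against the gain $p^{-1}=O(n^{r-\ell})$ and netting $n^{-\ell}$; each subsequent consecutive shared edge is neutral ($n^{-(r-\ell)}$ against $p^{-1}$). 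So the entire bound rests on a one-time factor $n^{-\ell}$ per overlap component, which for $\ell=2$ is only $n^{-2}$ --- a real but thin margin that your accounting does not exhibit. The same issue recurs more sharply in the planted models, where a copy $H_i$ with $i$ planted edges carries a deterministic factor $((1-p)/p)^{i/2}\approx n^{i(r-\ell)/2}$ that must be beaten by the combinatorial restriction of lying along $C^*$; the paper's stars-and-bars segment count gives $\lesssim n^{(r-\ell)(j-i)+1}$ such copies and a net contribution $n^{1-\ell}$, again surviving only because $\ell\ge 2$. Until this bookkeeping is carried out (or the null-model CLT is outsourced to Janson, as the paper does), the plan does not yet constitute a proof.
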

\begin{proof}
    The convergence to a Gaussian distribution under the null model follows from work of Janson using orthogonal decompositions \cite[Theorem 3, Remark 5.1]{J:94}. Note here that we have defined each $Y(P_k)$ in orthogonal decomposition form, and every component is connected. We now compute the mean and variances. Recall that for any $j$, we have 
    \[ Y(P_j) = \frac{1}{\sqrt{N_{P_j}}}\sum_{P_j} \prod_{e \in P_j} Y_e \]
    where $Y_e = \frac{\mathbbm{1}\{e \in G\} - p}{\sqrt{p(1-p)}}$ and $\E{Y(P_j)} = 0$ and $\Var{Y(P_j)} = 1$. Notice that for any $j_1 \neq j_2$, and each copy of $P_{j_1}$ and $P_{j_2}$ we have $\E{\prod_{e_1 \in P_{j_1}} Y_{e_1} \prod_{e_2 \in P_{j_2}} Y_{e_2}} = 0$. This implies that $\E{Y(P_{j_1})Y(P_{j_2})} = 0$ so the statistics for distinct subgraphs are uncorrelated. Thus, since each variable converges to a standard Gaussian, their joint limit is a standard $k$-dimensional Gaussian.

    We now study the behavior of the random variables under the planted model. The convergence to a Gaussian distribution is a little more subtle. We can view the planted model as sampling a random subgraph of $K_n \setminus C_n$. The copies of $P_j$ that have planted edges now have coefficients $\sqrt{\frac{1-p}{p}}$ raised to the number of planted edges. Now the general orthogonal decomposition outlined by Janson no longer holds, as we distinguish equivalence classes only up to isomorphism of $K_n \setminus C_n$. Fortunately for us, we have written the definition of $Y(P_j)$ in terms of an orthogonal decomposition, and importantly the coefficients of all isomorphic subgraphs are the same -- it is determined by the number of edges missing from $P_j$. Thus, the convergence follows once we check that the dominant contribution comes from connected subgraphs.
    
    We compute the mean and covariance structure of the resulting Gaussian. From this computation, it will follows that the dominant contribution comes from the full $P_j$'s with no planted edges. We can compute 
    \begin{align*}
        \EE^*[Y(P_j)] &= \EE^*\left[ \frac{1}{\sqrt{N_{P_j}}}\sum_{P_j} \prod_{e \in P_j} Y_e \right] = \frac{1}{\sqrt{N_{P_j}}}\sum_{P_j} \prod_{e \in P_j} \EE^*[Y_e] = \frac{n}{(r-\ell)} \cdot \sqrt{\frac{(1-p)^j}{p^jN_{P_j}}} \\
        &= (1+o(1))\frac{n}{r-\ell}\cdot\left(\frac{n^{(r-\ell)}}{ce^{(r-\ell)}\lambda}\right)^{j/2}\cdot \sqrt{\frac{\mathrm{Aut}(P_j)}{(n)_{v(P_j)}}}  = (1+o(1))\frac{\sqrt{A_j c^{-j} e^{-j(r-\ell)}}}{(r-\ell)}.
    \end{align*}
    The last equality follows since $\EE^*[Y_e] = \sqrt{\frac{1-p}{p}}$ if $e$ is a planted edge and $\EE^*[Y_e] = 0$ if $e$ is not a planted edge. The number of copies of $P_j$ which consist only of planted edges is the number of copies of $P_j$ in $\Cnlr$, which is $\frac{n}{r-\ell}$. 
    \begin{align*}
        \mathrm{Var}^*(Y(P_j)) &= \EE^*[Y(P_j)^2] - \EE^*[Y(P_j)]^2 \\
        &= \frac{1}{N_{P_j}}\sum_{P_1, P_2} \EE^*\left[\prod_{e_1 \in P_1} Y_{e_1} \prod_{e_2 \in P_2} Y_{e_2}\right] - \EE^*\left[\prod_{e_1 \in P_1} Y_{e_1}\right] \EE^*\left[\prod_{e_2 \in P_2} Y_{e_2}\right] \\
        &= \frac{1}{N_{P_j}} \sum_{P_1, P_2} \left(\frac{1-p}{p}\right)^{e(P_1 \cap C^*)}\mathbbm{1}\{P_1 \triangle P_2 \subset C^*, P_1 \cap P_2 \not\subset C^*\}.
    \end{align*}
    For each pair where $P_1 = P_2$ and $P_1 \cap C^* = \emptyset$, we get a contribution of 1. This amounts to a total contribution of $1-o(1)$. We show that all other terms are negligible. Let $(P_1, P_2)$ be such that $e(P_1 \cap C^*) = i \neq 0$, $P_1 \triangle P_2 \subset C^*$ and $P_1 \cap P_2 \not\subset C^*$. This pair contributes $\left( \frac{1-p}{p}\right)^i = (1+o(1))p^{-i}$. We now proceed to bound the number of such pairs. Let $a$ denote the number of $\ell$-path segments in $P_1 \cap C^*$. By stars and bars, we can choose the sizes of these segments in $\binom{i+a-1}{i}$ ways. We choose the location of each segment along $C^*$ in $\frac{n}{r-\ell}$ ways, leading to at most $\left(\frac{n}{r-\ell}\right)^a$ choices. There are $(r-\ell)(j-i) + \ell - \ell a$ vertices in $P_1$ that have yet to be fixed, leading to $n^{(r-\ell)(j-i) + \ell - \ell a}$ choices. Finally, given $P_1$, $P_2$ must live in the union $P_1 \cup C^*$. There are at most $ja2^a$ possibilities -- $j$ choices for which edge of $P_2$ first meets $C^*$, $a$ choices for where $P_2$ meets $C^*$, and $2$ choices for which structure to follow at each of the $a$ branching points. All together, this is an upper bound of
    \[ \sum_{a=1}^{j} \binom{i+a-1}{i}\left(\frac{n}{r-\ell}\right)^{a} n^{(r-\ell)(j-i)+\ell-\ell a} \cdot ja2^a\lesssim_{r,\ell} n^{(r-\ell)(j-i)+1}. \]
    Thus, the total contribution from such pairs is at most 
    \[ \frac{1}{N_{P_j}}n^{(r-\ell)(j-i)+1}p^{-i} \lesssim_{r,\ell} n^{1-\ell}. \]
    Summing from $i = 1$ to $j$, we obtain the desired estimate that 
    \[ \mathrm{Var}^*(Y(P_j)) = 1+o(1). \]
    For the covariance, we see that in computing $\EE^*[Y(P_{j_1})Y(P_{j_2})]$ we no longer have the possibility that $P_1 = P_2$ and therefore there is no contribution of 1. The same argument shows that $\mathrm{Cov}^*(Y(P_{j_1}),Y(P_{j_2})) = o(1)$. 
    
    Similarly, under the double planted model $\PP_t^{*2}$ the convergence to standard Gaussians follows by observing that all equivalence classes of isomorphic graphs have the same coefficients. By the calculation in the planted case, each of the two cycles contributes $\frac{\sqrt{A_j c^{-j} e^{-j(r-\ell)}}}{(r-\ell)}$ to the expectation. It remains to estimate the contributions involving both cycles. There are $t$ edges at which the cycles intersect, and at most $j2^j$ paths emanating from each of these intersections. Thus, the total contribution from paths intersecting both cycles is bounded by 
    \[ tj2^j\sqrt{\frac{(1-p)^j}{p^jN_{P_j}}} = o(1). \]
    Thus all together the expectation is 
    \begin{align*}
        \EE_t^{*2}[Y(P_j)]
        &= (1+o(1))\frac{2\sqrt{A_j c^{-j} e^{-j(r-\ell)}}}{(r-\ell)}.
    \end{align*}
    The covariance structure follows from a similar calculation to the planted model case. We again count the number of pairs $P_1$ and $P_2$ such that $e(P_1 \cap (C_1^* \cup C_2^*)) = i$, $P_1 \triangle P_2 \subset C_1^* \cup C_2^*$ and $P_1 \cap P_2 \not \subset C_1^* \cup C_2^*$. Once again let $a$ denote the number of $\ell$-path segments in $P_1 \cap (C_1^* \cup C_2^*)$. The number of ways to choose the sizes of these segments remains $\binom{i+a-1}{i}$. We can choose the locations of the segments in $\left(\frac{2n}{r-\ell}\right)^a$ ways, but these segments are no longer uniquely determined as they can oscillate between $C_1^*$ and $C_2^*$. This can happen in at most $2^i$ ways. The number of ways to choose the remaining vertices of $P_1$ is still $n^{(r-\ell)(j-i)+\ell-\ell a}$. Finally, $P_2$ must live in the union $P_1 \cup C_1^* \cup C_2^*$. There are $j$ choices for which edge of $P_2$ meets $C_1^* \cup C_2^*$, $a$ choices for the meeting location, and $3$ choices at each edge for which path to follow. Thus, there are at most $ja3^j$ choices for $P_2$. All together, we obtain an upper bound of 
    \[ \sum_{a=1}^j \binom{i+a-1}{i}\left(\frac{2n}{r-\ell}\right)^an^{(r-\ell)(j-i)+\ell-\ell a}ja3^j \lesssim_{r,\ell} n^{(r-\ell)(j-i)+1} \]
    as before. Thus, the total contribution remains $n^{1-\ell}$, and we have shown that $\mathrm{Var}_t^{*2}(Y(P_j)) = 1+o(1)$, and $\mathrm{Cov}_t^{*2}(Y(P_{j_1}), Y(P_{j_2})) = o(1)$. 
\end{proof}

Using the above central limit theorem, we evaluate the additional multiplicative terms from the right hand side of Lemma \ref{lemma: second moment}.
\begin{lemma}\label{lemma: CLTcor}
    Let $Y_N = \sum_{k=1}^N t_kY(P_k)$. For all $t = o(n)$ as $n \to \infty$, 
    \[ \frac{\EE_t^{*2}[e^{-2Y_N}]}{\EE^*[e^{-Y_N}]^2} = \exp(-(1+o(1))\sum_{k=1}^{N} \frac{A_k c^{-k} e^{-k(r-\ell)}}{(r-\ell)^2}). \]
\end{lemma}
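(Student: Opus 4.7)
\emph{Proof plan.} The strategy is to apply Lemma~\ref{lemma: CLT} to identify the Gaussian limit of $Y_N$ under $\PP^*$ and $\PP_t^{*2}$, and then read off the limiting moment generating functions in closed form. By our choice of $t_k$, the mean vector $\mu$ from Lemma~\ref{lemma: CLT} satisfies $\mu_k = t_k$. Hence under $\PP^*$ the linear combination $Y_N = \sum_{k=1}^N t_k Y(P_k)$ is asymptotically Gaussian with mean $\sum_k t_k\mu_k = \sigma_N^2$ and variance $\sum_k t_k^2 = \sigma_N^2$, where
\[ \sigma_N^2 := \sum_{k=1}^N \frac{A_k c^{-k} e^{-k(r-\ell)}}{(r-\ell)^2}, \]
and under $\PP_t^{*2}$ with $t = o(n)$ it is asymptotically Gaussian with mean $2\sigma_N^2$ and the same variance $\sigma_N^2$.

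Substituting into the Gaussian moment generating function identity $\EE[e^{\lambda Z}] = \exp(\lambda\mu + \lambda^2\sigma^2/2)$ gives
\[ \EE^*[e^{-Y_N}] = (1+o(1))\exp\left(-\sigma_N^2 + \frac{1}{2}\sigma_N^2\right) = (1+o(1))e^{-\sigma_N^2/2}, \]
\[ \EE_t^{*2}[e^{-2Y_N}] = (1+o(1))\exp\left(-4\sigma_N^2 + 2\sigma_N^2\right) = (1+o(1))e^{-2\sigma_N^2}. \]
Dividing yields $\exp(-(1+o(1))\sigma_N^2)$, which is exactly the claimed formula.

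The main obstacle is that weak convergence of $Y_N$ to a Gaussian does not by itself upgrade to convergence of exponential expectations, since $y \mapsto e^{-cy}$ is unbounded. I would close this gap by establishing uniform integrability of $\{e^{-Y_N}\}$ under $\PP^*$ and of $\{e^{-2Y_N}\}$ under $\PP_t^{*2}$; it suffices to show $\EE^*[e^{-(1+\varepsilon)Y_N}]$ and $\EE_t^{*2}[e^{-(2+\varepsilon)Y_N}]$ are uniformly bounded for some $\varepsilon > 0$. Each $Y(P_k)$ is a polynomial chaos of bounded degree in the centered edge indicators $Y_e$, so its high polynomial moments admit the same counting-based bounds used for the variance in Lemma~\ref{lemma: CLT}. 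Combined with the geometric decay $t_k^2 = \Theta((ce^{r-\ell})^{-k})$ and the uniform boundedness of $A_k$—which makes $\sigma_N^2$ bounded independently of $N$—these moment estimates yield the required uniform integrability, for instance via Carleman's criterion. If $N = N(n)$ grows with $n$, one additionally truncates at a large fixed $K$ and controls the $L^2$-tail $\sum_{k>K} t_k Y(P_k)$ via $\sum_{k>K} t_k^2 \to 0$, letting $K \to \infty$ after $n \to \infty$.
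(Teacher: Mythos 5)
Your computation is exactly the paper's: plug the Gaussian limits from Lemma~\ref{lemma: CLT} into the normal moment generating function, use $\mu_k = t_k$, and the ratio collapses to $\exp(-(1+o(1))\sum_{k=1}^N t_k^2)$. The paper simply writes ``the convergence follows'' where you flag the uniform-integrability issue for the unbounded function $e^{-cy}$; your added care there (bounded exponential moments of the fixed-degree chaoses $Y(P_k)$) is a refinement of, not a departure from, the paper's argument, and since $N$ is a fixed constant in the lemma's application the final truncation step you describe is not needed here.
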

\begin{proof}
    Using Lemma \ref{lemma: CLT}, we can compute
    \begin{align*}
        \EE^*[e^{-Y_N}] &= \exp(\sum_{k=1}^{N} -t_k\mu_k + \frac{t_k^2}{2}) = \exp(-(1+o(1))\sum_{k=1}^{N} \frac{A_k c^{-k}e^{-k(r-\ell)}}{2(r-\ell)^2}) \\
        \EE_t^{*2}[e^{-2Y_N}] &= \exp(\sum_{k=1}^{N} -2t_k(2\mu_k) + 2t_k^2) = \exp(-(1+o(1))2\sum_{k=1}^{N} \frac{A_k c^{-k}e^{-k(r-\ell)}}{(r-\ell)^2})
    \end{align*}
    The convergence follows. 
\end{proof}

\section{Proof of Theorem \ref{thm: lognormal}}\label{sec: PMT}
In this section, we combine the results of Sections \ref{sec: SMC} and \ref{sec: SSC} to prove Theorem \ref{thm: lognormal} in the case $\ell = 2$. To begin, we use the sharp second moment to deduce that $X$ converges in probability to a constant.
\begin{lemma}\label{lemma: plim}
    \[ \frac{Z(\Cnlr)}{\E{Z(\Cnlr)}e^Y} \overset{\mathrm{p}}{\longrightarrow} \exp(-\sum_{k=1}^{\infty} \frac{A_k c^{-k} e^{-k(r-\ell)}}{2(r-\ell)^2}). \]
\end{lemma}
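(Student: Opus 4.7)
The plan is to apply the second moment method directly to $X = Z(\Cnlr)/(\E{Z(\Cnlr)} e^Y)$. Since we want convergence in probability to the constant $L := \exp(-\sum_{k=1}^\infty A_k c^{-k} e^{-k(r-\ell)}/(2(r-\ell)^2))$, by Chebyshev's inequality it suffices to show $\E{X} \to L$ and $\E{X^2}/\E{X}^2 \to 1$; the lower bound $\ge 1$ on the latter ratio is automatic from Cauchy--Schwarz.

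For the first moment, the change-of-measure identity derived in the proof of Lemma~\ref{lemma: second moment} gives $\E{X} = \EE^*[e^{-Y}]$. Under $\PP^*$, Lemma~\ref{lemma: CLT} shows $Y = \sum_{k=1}^{\log n} t_k Y(P_k)$ is asymptotically Gaussian with mean $\sum_k t_k \mu_k$ and covariance $I$, and our choice $t_k = \mu_k$ gives $t_k \mu_k = t_k^2 = A_k c^{-k} e^{-k(r-\ell)}/(r-\ell)^2$. The Gaussian MGF then yields
\[ \E{X} = \EE^*[e^{-Y}] = \exp\Bigl(-\tfrac{1}{2}\sum_{k=1}^{\log n} t_k^2 + o(1)\Bigr) \longrightarrow L, \]
where we use that $A_k$ is uniformly bounded in $k$, so the series converges geometrically and the truncation $\log n$ may be replaced by $\infty$ in the limit.

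For the second moment ratio, Lemma~\ref{lemma: second moment} expresses $\E{X^2}/\E{X}^2 = \sum_F \Prob{\C \cap \CC \cong F}\, p^{-e(F)}\, \EE_{e(F)}^{*2}[e^{-2Y}]/\EE^*[e^{-Y}]^2$. Split the sum using the four regimes of Section~\ref{sec: SMC}. For $v(F) \leq \log n$ (so $e(F) = O(\log n) = o(n)$), Lemma~\ref{lemma: small overlap} with $\ell = 2$ (giving $n^{2-\ell} = 1$) bounds the vanilla factor by $\exp((1+o(1))\sum_k A_k c^{-k} e^{-k(r-\ell)}/(r-\ell)^2)$, while Lemma~\ref{lemma: CLTcor} furnishes the correction $\exp(-(1+o(1))\sum_k A_k c^{-k} e^{-k(r-\ell)}/(r-\ell)^2)$; these exactly cancel and contribute $1 + o(1)$. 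For $\log n < v(F) \leq n-1$, Lemma~\ref{lemma: mid overlap} gives $O_{r,\ell}(1/n)$, which times a uniformly bounded correction is $o(1)$. For $v(F) = n$ with $F$ connected, Lemma~\ref{lemma: big overlap conn} gives $O_{r,\ell}(1/\E{Z(\Cnlr)}) = o(1)$ since $\E{Z(\Cnlr)} \to \infty$; for disconnected $F$, Lemma~\ref{lemma: big overlap disconn} gives $O_{r,\ell}(1/n^2) = o(1)$. Summing yields $\E{X^2}/\E{X}^2 \le 1 + o(1)$, and Chebyshev's inequality then gives $X \to L$ in probability.

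The main technical subtlety is ensuring that the correction factor $\EE_t^{*2}[e^{-2Y}]/\EE^*[e^{-Y}]^2$ is uniformly bounded over $t = \Omega(n)$, a range not directly covered by Lemma~\ref{lemma: CLTcor}. This is a mild extension of the computation in Lemma~\ref{lemma: CLT}: under $\PP_t^{*2}$ each $Y(P_k)$ still has variance $1 + o(1)$ with a mean bounded above by $2\mu_k$, so $\EE_t^{*2}[e^{-2Y}]$ is bounded by a constant depending only on $r,\ell$, while $\EE^*[e^{-Y}]$ is bounded away from zero because $\sum_k t_k^2$ converges.
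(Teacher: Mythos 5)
Your overall strategy---Chebyshev applied to $X$, with the second moment split by overlap size and the small-overlap blow-up cancelled by the planted correction factor---is the same as the paper's. But there is a genuine gap in the step you dismiss as a ``mild extension'': you claim $\EE_t^{*2}[e^{-2Y}]$ is bounded by a constant for $t = \Omega(n)$ because each $Y(P_k)$ has variance $1+o(1)$ and mean at most $2\mu_k$ under $\PP_t^{*2}$. Control of the first two moments of a random variable does not bound its exponential moments, and the central limit theorem of Lemma \ref{lemma: CLT} is only convergence in distribution (and is stated only for $t = o(n)$ and a fixed number of path statistics), which likewise does not transfer to the moment generating function without a separate uniform-integrability argument. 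The paper avoids this entirely by replacing $Y$ with the truncation $\tilde Y = \max\{\min\{Y, M\}, -M\}$ with $M = \min\{\log\log\E{Z(\Cnlr)}, \log\log n\}$, so that $e^{-2\tilde Y} \leq e^{2M}$ holds deterministically; the correction factor in the middle- and large-overlap cases is then at most $e^{4M} = (\log n)^4$ (resp.\ $(\log\E{Z(\Cnlr)})^4$), which is swallowed by the $1/n$, $1/\E{Z(\Cnlr)}$, and $1/n^2$ bounds of Lemmas \ref{lemma: mid overlap}--\ref{lemma: big overlap disconn}. One must then separately show the truncation is harmless, which the paper does via $\Prob{Y \neq \tilde Y} = o(1)$.

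A second, related gap concerns your exact cancellation in the small-overlap case and your first-moment computation. Lemma \ref{lemma: CLTcor} evaluates the correction only for $Y_N$ with $N$ fixed, while the vanilla blow-up of Lemma \ref{lemma: small overlap} involves the sum up to $\log n$; with $N$ fixed the two exponents do not exactly cancel but differ by $\sum_{k=N+1}^{\log n} A_k c^{-k}e^{-k(r-\ell)}/(r-\ell)^2 = O(e^{-N(r-\ell)})$, which is small but not $o(1)$ as $n\to\infty$. The paper therefore works with $\tilde X_N$ for a fixed $N = N_0(\epsilon,\delta)$, obtains $\E{\tilde X_N^2}/\E{\tilde X_N}^2 \leq 1+\epsilon^2\delta+o(1)$, and separately controls $|X - \tilde X|$ and $|\tilde X - \tilde X_N|$ using $\E{(Y-Y_N)^2} \lesssim_{r,\ell} e^{-N(r-\ell)}$, before sending $\delta \to 0$. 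Your proposal needs either this two-parameter limiting argument or a justification of the CLT and moment-generating-function asymptotics uniformly in $N$ up to $\log n$; as written, neither is supplied.
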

\begin{proof}
    Fix $\epsilon > 0$. For any $\delta > 0$, choose $N = N_0(\epsilon, \delta)$ such that $\exp(\sum_{k=N_0}^\infty \frac{A_k c^{-k} e^{-k(r-\ell)}}{(r-\ell)^2}) < 1+\epsilon^2\delta$ and $e^{-N(r-\ell)} \leq \delta(\log (1+\epsilon))^2$.
    
    Denote the desired limit by $L = \exp(-\sum_{k=1}^{\infty} \frac{A_k c^{-k} e^{-k(r-\ell)}}{2(r-\ell)^2})$. We write 
    \[  \abs{X - L} \leq |X - \tilde X| + |\tilde X - \tilde X_N| + |\tilde X_N - L| \]
    where $\tilde X$ be the truncation $\tilde X = \frac{Z(\Cnlr)}{\E{Z(\Cnlr)}}\exp(-\tilde Y)$ with $\tilde Y = \max\{ \min\{ Y, M\}, -M\}$ and $M = \min\{\log\log \E{Z(\Cnlr)}, \log\log n\}$. Recall that $Y_N$ denotes the partial sum $\sum_{k=1}^N t_kY(P_k)$, and define the truncation $\tilde X_N$ similarly.

    We first control the difference between $Y$ and $Y_N$. Notice that 
    \[ \E{(Y - Y_N)^2} = \E{\left(\sum_{k=N}^{\log n} t_kY(P_k)\right)^2} = \sum_{k=N}^{\log n} t_k^2\E{Y(P_k)^2} = \sum_{k=N}^{\log n} \frac{A_k c^{-k} e^{-k(r-\ell)}}{(r-\ell)^2} \lesssim_{r,\ell} e^{-N(r-\ell)}. \]

    The first difference is small since $Y$ is close to $Y_N$, and Lemma \ref{lemma: CLT} implies $Y_N$ is normally distributed, and thus is very large in magnitude with small probability. Indeed for any $\epsilon > 0$, 
    \[ \Prob{|X - \tilde X| > \epsilon} \leq \Prob{Y \neq \tilde Y} = \Prob{|Y| > M} \leq \Prob{|Y_N| > \frac{M}{2}} + \Prob{|Y - Y_N| > \frac{M}{2}} = o(1). \]

    For the third difference, recall from (a variant of) Lemma \ref{lemma: second moment} that we can write
    \[ \frac{\EE[\tilde X_N^2]}{\EE[\tilde X_N]^2} = \sum_{t=1}^{m} \frac{\Prob{|\C \cap \CC| = t}}{p^t} \cdot \frac{\EE_t^{*2}[e^{-2\tilde Y_N}]}{\EE^*[e^{-\tilde Y_N}]^2} = \sum_{F \subset \Cnlr} \frac{\E{N_F(\C \cap \CC)}}{p^{e(F)}}\frac{\EE_{e(F)}^{*2}[e^{-2\tilde Y_N}]}{\EE^*[e^{-\tilde Y_N}]^2}. \]
    We once again handle the various subgraphs in a few cases, similarly to the analysis in Section \ref{sec: SMC}. \\
        \textbf{Case 1.} If $v(F) \leq \log n$ then Lemmas \ref{lemma: small overlap} and \ref{lemma: CLTcor} yield that
        \begin{multline*}
            \sum_{F \subset \Cnlr, v(F) \leq \log n} \frac{\E{N_F(\C \cap \CC)}}{p^{e(F)}}\frac{\EE_{e(F)}^{*2}[e^{-2\tilde Y_N}]}{\EE^*[e^{-\tilde Y_N}]^2} \leq \\ \exp((1+o(1))\sum_{k=1}^{\log n} \frac{A_k}{(r-\ell)^2}c^{-k}e^{-k(r-\ell)}) \cdot \\ 
            \exp(-(1+o(1))\sum_{k=1}^{N} \frac{A_k}{(r-\ell)^2}c^{-k}e^{-k(r-\ell)}) \leq 1 + \epsilon^2\delta + o(1).
        \end{multline*}
        Here we used that truncating $Y_N$ to $\tilde Y_N$ only changes the first and second moments by $o(1)$ factors.
        \textbf{Case 2.} If $\log n < v(F) < n$ then Lemma \ref{lemma: mid overlap} along with the truncation implies 
        \[ \sum_{F \subset \Cnlr, v(F) \leq \log n} \frac{\E{N_F(\C \cap \CC)}}{p^{e(F)}}\frac{\EE_{e(F)}^{*2}[e^{-2\tilde Y_N}]}{\EE^*[e^{-\tilde Y_N}]^2} \lesssim_{r,\ell} \frac{e^{4M}}{n} \leq \frac{(\log n)^4}{n} = o(1). \]
        \textbf{Case 3.} If $v(F) = n$ and $F$ is connected, then Lemma \ref{lemma: big overlap conn} along with the truncation implies 
        \[ \sum_{F \subset \Cnlr, v(F)=n, F\text{ conn.}} \frac{\Prob{\C \cap \CC \cong F}}{p^{e(F)}}\frac{\EE_{e(F)}^{*2}[e^{-2\tilde Y_N}]}{\EE^*[e^{-\tilde Y_N}]^2} \lesssim_{r,\ell} \frac{e^{4M}}{\E{Z(\Cnlr)}} \leq \frac{\left(\log \E{Z(\Cnlr)}\right)^4}{\E{Z(\Cnlr)}}. \]
        \textbf{Case 4.} If $v(F) = n$ and $F$ is disconnected then Lemma \ref{lemma: big overlap disconn} along with the truncation implies 
        \[ \sum_{F \subset \Cnlr, v(F)=n, F\text{ disconn.}} \frac{\E{N_F(\C \cap \CC)}}{p^{e(F)}}\frac{\EE_{e(F)}^{*2}[e^{-2\tilde Y_N}]}{\EE^*[e^{-\tilde Y_N}]^2} \lesssim_{r,\ell} \frac{e^{4M}}{n^2} \leq \frac{(\log n)^4}{n^2} = o(1). \]
    In particular, when $\E{Z(\Cnlr)} \to \infty$, we have that 
    \[ \frac{\EE[\tilde X_N^2]}{\EE[\tilde X_N]^2} \leq 1 + \epsilon^2\delta + o(1). \]
    By an application of Chebyshev's inequality we deduce
    \[ \Prob{\abs{\tilde X_N - \EE[\tilde X_N]} > \epsilon \EE[\tilde X_N]} \leq \delta + o(1). \]
    Note from the proof of Lemma \ref{lemma: CLTcor} that 
    \[ \EE[\tilde X_N] = \E{\frac{Z(\Cnlr)}{\E{Z(\Cnlr)}e^{\tilde Y_N}}} = \EE^*[e^{-{\tilde Y_N}}] = (1+o(1))\exp(-\sum_{k=1}^{N} \frac{A_k c^{-k} e^{-k(r-\ell)}}{2(r-\ell)^2}) \]
    which is bounded by a constant in $r$ and $\ell$. Since 
    \begin{multline*}
        \exp(-\sum_{k=1}^{N} \frac{A_k c^{-k} e^{-k(r-\ell)}}{2(r-\ell)^2}) - L = \\ \exp(-\sum_{k=1}^{N} \frac{A_k c^{-k} e^{-k(r-\ell)}}{2(r-\ell)^2})\left(1 - \exp(\sum_{k=N}^{\infty} \frac{A_k c^{-k} e^{-k(r-\ell)}}{2(r-\ell)^2})\right) \lesssim_{r,\ell} \epsilon^2\delta,
    \end{multline*}
    up to constant factors the third difference is greater than $\epsilon$ with probability at most $\delta$. 

    For the middle difference, we estimate the probability 
    \[ \Prob{\exp(|\tilde Y - \tilde Y_N|) > 1+\epsilon}. \]
    We decompose this event into three cases: either one of $Y$ or $Y_N$ is large, or the difference between $Y$ and $Y_N$ is large. 
    \begin{align*}
        \Prob{\exp(|\tilde Y - \tilde Y_N|) > 1+\epsilon} &\leq \Prob{Y \neq \tilde Y} + \Prob{Y_N \neq \tilde Y_N} + \Prob{|Y - Y_N| > \log(1+\epsilon)} \\
        &\lesssim_{r,\ell} \frac{e^{-N(r-\ell)}}{(\log(1+\epsilon))^2} + o(1) \leq \delta + o(1).
    \end{align*}
    This implies that 
    \[ \Prob{|\tilde X - \tilde X_N| > \epsilon \tilde X_N} \leq \delta + o(1). \]
    Recall from above that with probability at least $1 - \delta$, $\tilde X_N$ is close to $\EE[\tilde X_N]$ which is bounded by a constant. This yields 
    \[ \Prob{|\tilde X - \tilde X_N| > \epsilon \EE[\tilde X_N]} \leq 2\delta + o(1) \]
    which is the desired bound on the middle difference.
    
    Combining the estimates on all three terms, and since $\delta > 0$ was chosen arbitrarily, the claimed convergence follows. 
\end{proof}
\begin{proof}[Proof of Theorem \ref{thm: lognormal}]
    By Lemma \ref{lemma: CLT}
    \[ Y_N \overset{\mathrm{d}}{\longrightarrow} \mathcal{N}\left(0, \sum_{k=1}^{N} \frac{A_k c^{-k} e^{-k(r-\ell)}}{(r-\ell)^2} \right). \]
    We also have from the proof of Lemma \ref{lemma: plim} that $\E{(Y - Y_N)^2} \lesssim_{r,\ell} e^{-N(r-\ell)}$, i.e. $Y_N$ converges to $Y$ in $L^2$. Thus, sending $N \to \infty$ we deduce that 
    \[ Y \overset{\mathrm{d}}{\longrightarrow} \mathcal{N}\left(0, \sum_{k=1}^{\infty} \frac{A_k c^{-k} e^{-k(r-\ell)}}{(r-\ell)^2} \right). \]
    Thus, we know that 
    \[ e^Y \overset{\mathrm{d}}{\longrightarrow} \mathrm{Lognormal}\left(0, \sum_{k=1}^{\infty} \frac{A_k c^{-k} e^{-k(r-\ell)}}{(r-\ell)^2} \right). \]
    The theorem follows from applying Lemma \ref{lemma: plim} and noting that multiplying by an exponential factor shifts the mean. 
\end{proof}

\section{Proof of Theorem \ref{thm: poisson}}\label{sec: poisson}
In this section we prove Theorem \ref{thm: poisson} describing the constant expectation regime. The proof relies on a sub-sampling scheme based on the following heuristic. Suppose we sample a host hypergraph with a density that induces a slowly growing number of Hamilton $\ell$-cycles in expectation. Theorem \ref{thm: lognormal} gives us good control over the actual number of cycles that appear. The next lemma -- Lemma \ref{lemma: no big overlap} -- implies a near independence in the sense that the cycles are pairwise nearly disjoint with high probability. Thus, if we sub-sample the edges of this host hypergraph at the appropriate rate, each cycle should be retained approximately independently, inducing a Poisson type behavior.

We begin with the approximate independence lemma, which follows from our second moment computation combined with a basic application of Markov's inequality.
\begin{lemma}\label{lemma: no big overlap}
    Let $\ell \geq 2$ and $p = p(n)$ be such that $\E{Z(\Cnlr)} = \log n$. With probability $1-o(1)$, there do not exist $C_1$ and $C_2$ Hamilton $\ell$-cycles in $G_r(n,p)$ such that $\log n \leq |C_1 \cap C_2| \leq \frac{n}{r-\ell}-1$.
\end{lemma}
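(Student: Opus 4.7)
The plan is to apply Markov's inequality to the expected number of ordered pairs $(C_1, C_2)$ of Hamilton $\ell$-cycles in $G_r(n, p)$ with $\log n \leq |C_1 \cap C_2| \leq m - 1$, where $m = n/(r-\ell)$. Exactly as in Section~\ref{sec: SMC}, this expectation factors as
\[
\E{Z(\Cnlr)}^2 \sum_{\substack{F \subsetneq \Cnlr \\ e(F) \geq \log n}} \frac{\Prob{\C \cap \CC \cong F}}{p^{e(F)}}.
\]
Since every proper connected subgraph of $\Cnlr$ is an $\ell$-path, a subgraph $F$ with $c$ components has $v(F) = c\ell + (r-\ell)e(F) \geq \ell + \log n > \log n$ whenever $e(F) \geq \log n$, so the hypotheses of the lemmas from Section~\ref{sec: SMC} are met.

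From here I would split the sum by $v(F)$ and quote the estimates already in place. The case $\log n < v(F) \leq n-1$ is bounded by Lemma~\ref{lemma: mid overlap} with total contribution $O_{r,\ell}(1/n)$, and the disconnected spanning case is bounded by Lemma~\ref{lemma: big overlap disconn} with contribution $O_{r,\ell}(1/n^2)$. These two contributions, multiplied by $\E{Z(\Cnlr)}^2 = (\log n)^2$, already give $o(1)$.

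The subtle case is $v(F) = n$ with $F$ connected, where the constraint $e(F) \leq m-1$ forces $F \neq \Cnlr$. A direct application of Lemma~\ref{lemma: big overlap conn} yields only $O(1/\E{Z(\Cnlr)}) = O(1/\log n)$, which after multiplication by $(\log n)^2$ produces $O(\log n)$, not $o(1)$. So the main obstacle is to isolate the diagonal contribution $F = \Cnlr$: inspecting the proof of Lemma~\ref{lemma: big overlap conn}, the ``$1$'' appearing in the bound $\frac{1}{\E{Z(\Cnlr)}}\bigl(1 + O(n^{-(r-\ell-1)})\bigr)$ is precisely the $k=0$ term in the internal sum, i.e.\ the contribution of $F = \Cnlr$. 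The remainder, which captures all connected spanning $F \neq \Cnlr$, is thus $O_{r,\ell}\bigl(1/(\E{Z(\Cnlr)} \cdot n^{r-\ell-1})\bigr)$, and for $r - \ell = 1$ the strengthened bound derived in the same proof gives $O_{r,\ell}(1/(\E{Z(\Cnlr)} \cdot n))$. In either regime this is $O(1/(n \log n))$.

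Summing the three cases gives a total of $O_{r,\ell}(1/n)$ for the inner sum, so the whole expectation is $O((\log n)^2 / n) = o(1)$, and Markov's inequality completes the proof. The only nontrivial point is the need to re-enter the proof of Lemma~\ref{lemma: big overlap conn} to peel off the $F = \Cnlr$ term; everything else is a bookkeeping application of the Section~\ref{sec: SMC} estimates.
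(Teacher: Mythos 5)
Your proposal is correct and follows essentially the same route as the paper: bound the expected number of overlapping pairs by $\E{Z(\Cnlr)}^2$ times the Section~\ref{sec: SMC} overlap sums and apply Markov, with the one genuine subtlety being that the $F = \Cnlr$ diagonal term must be peeled off from Lemma~\ref{lemma: big overlap conn} (using the strengthened $r-\ell=1$ bound there when needed). The paper handles that last point with a remark that ``a quick scan through the proof reveals the remaining terms are all $O(1/n)$''; you have simply carried out that scan explicitly.
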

\begin{proof}
    We compute the expected number of such pairs of cycles. For any fixed $t$, by linearity we can express the expected number of pairs of cycles with overlap $t$ as 
    \[ N_{\Cnlr}^2\Prob{|C_1 \cap C_2| = t}p^{2\frac{n}{r-\ell}-t} = \log^2 n \cdot \frac{\Prob{|C_1 \cap C_2| = t}}{p^t}. \]
    By Lemmas \ref{lemma: mid overlap}, \ref{lemma: big overlap conn} and \ref{lemma: big overlap disconn} we know 
    \[ \sum_{\log n < t < \frac{n}{r-\ell}-1} \frac{\Prob{|C_1 \cap C_2| = t}}{p^t} \lesssim_{r,\ell} \frac{1}{n}. \]
    Note here that we do not have the $\frac{1}{\E{Z(\Cnlr)}}$ term from Lemma \ref{lemma: big overlap conn} as this contribution arises from full overlap which is excluded in this sum. A quick scan through the proof reveals that the remaining terms are all $O(\frac{1}{n})$. Thus, the expected number of pairs of cycles that have overlap between $\log n$ and $\frac{n}{r-\ell}-1$ is bounded by 
    \[ \sum_{\log n < t < \frac{n}{r-\ell}-1} N^2_{\Cnlr}\Prob{|C_1 \cap C_2| = t}p^{2\frac{n}{r-\ell}-t} \lesssim_{r,\ell} \frac{\log^2 n}{n}. \]
    The claim then follows by Markov's inequality.
\end{proof}
\begin{proof}[Proof of Theorem \ref{thm: poisson}]
    Generate a random hypergraph $G \sim G_r(n,p)$ as follows. First sample $G' \sim G_r(n, p')$ where $p' = p'(n)$ is such that $\E{Z(\Cnlr)} = \log n$. To obtain $G$ from $G'$, retain each edge independently with probability $\left(\frac{m}{\log n}\right)^{\frac{r-\ell}{n}}$. 
    
    Let the number of Hamilton $\ell$-cycles in $G'$ be denoted by $Z'$. By Lemma \ref{lemma: no big overlap}, all pairs of cycles in $G'$ overlap in at most $\log n$ edges with high probability. On this event, there are at most $(Z')^2\log n$ edges that belong to multiple cycles. Notice that the deletion probability of an edge is 
    \[ 1 - \left(\frac{m}{\log n}\right)^{\frac{r-\ell}{n}} = O\left(\frac{\log\log n}{n}\right) \]
    so with probability at least $1 - O(\frac{(Z')^2\log n \log\log n}{n})$ no edge belonging to multiple cycles will be deleted in our sub-sampling procedure. By Theorem \ref{thm: lognormal} this is $1-o(1)$ with high probability. On the event that no edge belonging to multiple cycles is deleted, each cycle is retained independently with probability $\frac{m}{\log n}$. It follows that the number of cycles in $G$ differs in total variation distance from $\mathrm{Bin}(Z', \frac{m}{\log n})$ by $o(1)$. We now relate this to the corresponding Poisson distribution. By Le Cam's theorem, 
    \[d_{\mathrm{TV}}\left(\mathrm{Bin}\left(Z', \frac{m}{\log n}\right), \mathrm{Pois}\left(m\cdot \frac{Z'}{\log n}\right)\right) \leq Z'\cdot \left(\frac{m}{\log n} \right)^2. \]
    Once again by Theorem \ref{thm: lognormal}, we know that with high probability the right hand side is $o(1)$. Combining this with the fact that for $\ell \geq 3$, $\frac{Z'}{\log n} = 1+o(1)$ with high probability suffices to prove part 1. For part 2, we check the convergence of the cumulative distribution function.
    \begin{multline*}
        \Prob{\mathrm{Pois}(m\cdot \frac{Z'}{\log n}) \leq k} = \int \Prob{\mathrm{Pois}(m\cdot z) \leq k} \ d\Prob{\frac{Z'}{\log n} \leq z} \\
        \xrightarrow{n\to\infty} \int\Prob{\mathrm{Pois}(m\cdot z) \leq k} \ d\Prob{L \leq z} = \Prob{\mathrm{Pois}(m\cdot L) \leq k}
    \end{multline*}
    where we used the fact that $\Prob{\mathrm{Pois}(m \cdot z) \leq k}$ is a bounded continuous function of $z$ and the distributional convergence of $\frac{Z'}{\log n}$ to $L$. 
\end{proof}

\bibliographystyle{amsplain0}
\bibliography{hamilton}

\end{document}